\theoremstyle{plain}
\newtheorem{thm}{Theorem}[section]
\newtheorem{pro}[thm]{Proposition}
\newtheorem{lem}[thm]{Lemma}
\newtheorem{rem}[thm]{Remark}
\newtheorem{defin}[thm]{Definition}
\newtheorem*{ex*}{Example}
\newcommand{\N}{\mathbb{N}}
\def\R{\mathbb{R}^d}
\def\RR{\mathbb{R}^{d+1}}
\def\gr{\nabla}
\def\sgr{\nabla_{\!0}}
\def\sgrt{\widetilde{\nabla}_{\!0}}
\def\z{\zeta}
\def\a{\alpha}
\def\b{\beta}
\def\eps{\varepsilon}
\def\vp{\varphi}
\def\Lu{\mathcal{L}_{\mu}}
\def\eL{L_{\mu}}
\def\Q{Q_{n,j,\kappa}^{\mu}}
\def\L2{L^2(B,dW_\mu)}
\def\W{dW_\mu}
\def\Wt{\widetilde{W}_\mu}
\def\dWt{d\widetilde{W}_\mu}
\def\dist{\rho}
\def\distB{\rho_B}
\def\distI{\rho_I}
\def\lambdamu{\lambda_{\mu}}
\DeclareMathOperator{\supp}{supp}
\DeclareMathOperator{\interior}{int}
\DeclareMathOperator{\loc}{loc}
\DeclareMathOperator{\esssup}{esssup}
\title[Analysis in the multi-dimensional ball]{Analysis in the multi-dimensional ball}
\author[P. Sj\"ogren]{Peter Sj\"ogren}
\address{Peter Sj\"ogren, \newline
			Department of Mathematical Sciences, 
			Chalmers University of Technology and University of Gothenburg, \newline
SE-412 96 G\"oteborg, Sweden 
      }
\email{peters@chalmers.se}
\author[T.Z. Szarek]{Tomasz Z. Szarek}
\address{Tomasz Z. Szarek,     \newline
			Institute of Mathematics,
		Polish Academy of Sciences, \newline
      \'Sniadeckich 8,
      00--656 Warszawa, Poland \newline
\indent and \newline
			Department of Mathematics, Informatics and Mechanics,
		University of Warsaw, \newline
		Banacha 2, 
		02--097 Warszawa, Poland
      }
\email{szarektomaszz@gmail.com}
\begin{document}

\begin{abstract}
We study the heat semigroup maximal operator associated with a well-known orthonormal system in the $d$-dimensional ball. The corresponding heat kernel is shown to satisfy Gaussian bounds. As a consequence, we can prove weighted $L^p$ estimates, as well as some weighted inequalities in mixed norm spaces, for this maximal operator. 
\end{abstract}

\maketitle

\footnotetext{
\emph{\noindent 2010 Mathematics Subject Classification:} primary 42B25; secondary 42C05, 31C25.\\
\emph{Key words and phrases:} 
maximal operator, weighted inequality, mixed norm space, Gaussian bound, Dirichlet form, Poincar\'e inequality. \\
\indent	
The second author was partially supported by the National Science Centre of Poland, project no.\ 2015/19/D/ST1/01178.
}

\section{Introduction and Preliminaries} \label{sec:intro}

Let $|\cdot|$ denote the classical distance on the Euclidean space $\R$, $d \ge 2$, and let $B=\{x \in \R : |x| \le 1\}$ be the closed unit ball. For a fixed $\mu>-1/2$, we consider the second order differential operator
\[
\eL f = - \Delta f 
+ 
\sum_{i,j=1}^d x_i x_j \frac{\partial^2 f}{ \partial x_i \, \partial x_j }
+
(2\mu + d) \sum_{i=1}^d x_i \frac{\partial f}{ \partial x_i },
\]
defined on $C^2(B)$. As we shall see, $\eL$ is symmetric and non-negative in $\L2$, where $\W$ is the measure on $B$ defined by the density
\[
\W (x) = W_\mu(x) \, dx, \qquad
W_\mu(x) = (1-|x|^2)^{\mu - 1/2}, \qquad x \in B.
\]
There are several orthonormal bases in $\L2$ consisting of polynomials which are eigenfunctions of $\eL$, see \cite[pp. 38-40]{DX}. One of them is
\[
\Q(x) = \big( C_{n,j}^\mu \big)^{-1} P_j^{\mu-1/2, n-2j + d/2 -1}(2|x|^2 - 1)
S_{n-2j, \kappa}(x), 
\]
where $n \in \N$, \, $0 \le 2j \le n$ and $1 \le \kappa \le h(n-2j)$.
Here $P_j^{\a,\b}$ is the classical Jacobi polynomial of degree $j$ and type $(\a,\b)$ defined on the interval $[-1,1]$ by the Rodrigues formula, cf.\ \cite[(4.3.1)]{Sz},
\[
P_j^{\a,\b}(x) = \frac{(-1)^j}{2^j  j!} (1-x)^{-\a} (1+x)^{-\b} 
\frac{d^j}{dx^j} \Big[ (1-x)^{j + \a} (1+x)^{j + \b}  \Big],
\qquad j \in \N.
\]
For each $k \in \N=\{0, 1, \ldots \}$, $\{ S_{k,\kappa} : \kappa =1, \ldots, h(k) \}$ is a basis of the solid spherical harmonics of degree $k$ in $\R$, which is normalized in $L^2$ with respect to the area measure on the unit sphere and 
$h(k) = \binom{d+k-1}{k} - \chi_{k \ge 2} \binom{d+k-3}{k-2}$ stands for the dimension of this space.
Further, the normalizing constants are given by
\[
C_{n,j}^\mu
=
\bigg( \frac{\Gamma(j + \mu + 1/2) \, \Gamma(n - j + d/2) }
{2\big(n + \mu + (d-1)/2 \big) j! \, \Gamma\big(n - j +\mu + (d-1)/2\big)} \bigg)^{1/2}.
\]
Note that our definition of $\Q$ coincides with those given in \cite{C,DX}. Moreover, we use the same normalization as in \cite{C}, which is slightly different from that in \cite{DX}. Since $\eL + (d + 2\mu -1)^2/4$   
coincides with the operator denoted $\mathcal{L}^{d,\mu}$ in \cite[p.\,1021]{C}, we have 
\[
\eL \Q = \lambda_n^\mu  \Q, \qquad \lambda_n^\mu = n(n + d + 2\mu -1).
\]
As verified in Proposition~\ref{prop:Lsym} below, 
$\eL$ is symmetric on $C^2(B)$, and it is not hard to show that the closure of $\eL$, from now on denoted by $\Lu$, is given by
\begin{align} \label{def:Lser}
\Lu f = \sum_{n=0}^\infty \sum_{j=0}^{\lfloor n/2 \rfloor} \sum_{\kappa=1}^{h(n-2j)} 
\lambda_n^\mu \big\langle f, \Q \big\rangle_{W_\mu} \Q, \qquad 
\big\langle f, \Q \big\rangle_{W_\mu} = \int_B f \, \Q \, \W
\end{align} 
on the natural domain consisting of those $f \in \L2$ for which the above series converges in $\L2$. Here and later on $\lfloor \cdot \rfloor$ denotes the floor function. It is straightforward to check that $\Lu$ is the unique self-adjoint and non-negative extension of $\eL$. The associated heat semigroup is given via the spectral series
\[
H_t^\mu f 
=
e^{-t \Lu} f
= 
\sum_{n=0}^\infty \sum_{j=0}^{\lfloor n/2 \rfloor} \sum_{\kappa=1}^{h(n-2j)} 
e^{-t \lambda_n^\mu} \big\langle f, \Q \big\rangle_{W_\mu} \Q,
\qquad t>0, \quad f \in \L2.
\]
Further, $H_t^\mu$ possesses an integral representation valid, as we will see below, for general functions $f$. In particular, 
\begin{equation} \label{Hint}
H_t^\mu f (x)
= \int_B h_t^\mu (x,y) f(y) \, \W(y), 
\qquad x \in B, \quad t > 0, \quad f \in \L2,  
\end{equation}
where the heat kernel can be expressed as a highly oscillating series
\[
h_t^\mu (x,y)
=
\sum_{n=0}^\infty \sum_{j=0}^{\lfloor n/2 \rfloor} \sum_{\kappa=1}^{h(n-2j)}
e^{-t \lambda_n^\mu} \Q(x) \Q(y), 
\qquad x,y \in B, \quad t >0,
\]
understood in the $L^2$ sense.
Since there exists a constant $C=C(d,\mu) >0$ such that
\begin{equation}\label{Qest}
\big| \Q(x) \big| \le C e^{n}, \qquad x \in B, \quad n \in \N, \quad 0 \le 2j \le n, \quad 1 \le \kappa \le h(n-2j), 
\end{equation}
see Proposition~\ref{prop:Qest} below,
one can easily show that the series defining $H_t^\mu f (x)$ and $h_t^\mu (x,y)$ converge pointwise and produce continuous functions of 
$(t,x) \in (0,\infty) \times B$ and $(t,x,y) \in (0,\infty) \times B^2$, respectively.

The heat semigroup maximal operator is
\[
H_*^\mu f (x) = \sup_{t > 0} |H_t^\mu f (x)|, \qquad x \in B.
\]

Before we state the main results of our paper, we need to 
introduce some more notation.
For $x,y \in B$, we let $ \widetilde x =\big( x, \sqrt{1-|x|^2} \big)$ and
 $ \widetilde y =\big( y, \sqrt{1-|y|^2} \big)$ be the corresponding points in the hemisphere 
$S_+^d = \{ x \in \mathbb{R}^{d+1} : |x| = 1, x_{d+1} \ge 0 \}$. The relevant distance $\distB$ on the ball $B$,  see for instance \cite[p.\,403]{DaiXu}, is defined as the geodesic distance on  $S^d$ between  $ \widetilde x$
and  $ \widetilde y$, denoted  $ \dist(  \widetilde x, \widetilde y)$.
This means that 
\[
\distB(x,y) =  \dist(  \widetilde x, \widetilde y) = \arccos 
\Big(\langle x , y \rangle + \sqrt{1 - |x|^2} \sqrt{1 - |y|^2}\Big).  
\]
The triple $(B,\W,\distB)$ forms a space of homogeneous type in the sense of Coifman and Weiss; see Lemma~\ref{lem:doub} below. For $1 \le p < \infty$ we denote by $A_p^\mu$ the set of $A_p$ weights for this space.

Using \eqref{Qest} it is not hard to show that for any fixed $1 \le p < \infty$ and $w \in A_p^\mu$ there exists a constant $C=C(d,\mu) >0$ such that for $f \in L^p(B, w \, \W)$
\begin{equation*}
\big| \big\langle f, \big| \Q \big| \big\rangle_{W_\mu} \big| \le  C e^{n} \|f\|_{L^p(B, w \, \W)}, 
\qquad n \in \N, \quad 0 \le 2j \le n, \quad 1 \le \kappa \le h(n-2j). 
\end{equation*}
This together with \eqref{Qest} allows one to prove that the series defining $h_t^{\mu} (x,y)$ and 
$H_t^\mu f (x)$ converge pointwise if $f \in L^p(B, w \, \W)$, $w \in A_p^\mu$, $1 \le p < \infty$, and that \eqref{Hint} is satisfied.

The first main result of our paper reads as follows.

\begin{thm}\label{thm:main}
The heat semigroup maximal operator $H_*^\mu$ is bounded on 
$L^p(B, w \, \W)$, $w \in A_p^\mu$, $1<p<\infty$, and bounded from 
$L^1(B, w \, \W)$ to weak $L^1(B, w \, \W)$, $w \in A_1^\mu$.
\end{thm}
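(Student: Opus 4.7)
The plan is to reduce Theorem~\ref{thm:main} to two ingredients: a Gaussian upper bound for the heat kernel $h_t^\mu(x,y)$ in the intrinsic geometry $(B, \W, \distB)$, and the classical weighted theory of the Hardy--Littlewood maximal function on a space of homogeneous type. Once the doubling property of $(B, \W, \distB)$ is in hand (Lemma~\ref{lem:doub}), and once we have a Gaussian bound
\[
h_t^\mu(x,y) \le \frac{C}{W_\mu\big(B_{\distB}(x,\sqrt{t})\big)}\, \exp\!\Big(-c\,\frac{\distB(x,y)^2}{t}\Big),\qquad 0<t\le 1,
\]
together with an adequate long-time control for $t \ge 1$, the standard dyadic argument of Coifman--Weiss yields the pointwise estimate $H_*^\mu f(x) \lesssim M^\mu f(x)$, where $M^\mu$ is the Hardy--Littlewood maximal operator associated with the quasi-metric measure space $(B,\W,\distB)$. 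Weighted $L^p$ and weak-type $L^1$ bounds for $M^\mu$ for $A_p^\mu$ weights are classical in spaces of homogeneous type, and Theorem~\ref{thm:main} follows.

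The bulk of the work is therefore to produce the Gaussian bound. I would approach this through a Dirichlet form argument. The first step is to identify the quadratic form
\[
\mathcal{E}_\mu(f,g) = \langle \Lu f, g\rangle_{W_\mu},\qquad f,g \in C^2(B),
\]
and to show, by direct computation of $\eL$ (integration by parts against the factor $W_\mu(x)=(1-|x|^2)^{\mu-1/2}$), that $\mathcal{E}_\mu$ is a strongly local, symmetric, regular Dirichlet form on $\L2$; the intrinsic (Carnot--Carath\'eodory) distance induced by $\mathcal{E}_\mu$ should coincide with $\distB$ up to constants, which is most transparent after lifting $x \mapsto \widetilde x = (x,\sqrt{1-|x|^2})$ to the hemisphere $S^d_+$ and recognizing $\Lu$ (modulo a bounded perturbation) as the Laplace--Beltrami operator of $S^d$ acting on functions even in the last coordinate, with Jacobian $W_\mu(x)\,dx$. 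The second step is to prove a scale-invariant $L^2$ Poincar\'e inequality
\[
\int_{B_{\distB}(x_0,r)} |f-f_{B}|^2\, \W \;\le\; C\,r^2 \int_{B_{\distB}(x_0,2r)} \Gamma(f)\, \W,
\]
where $\Gamma$ is the \emph{carr\'e du champ} of $\mathcal{E}_\mu$. Combining doubling with the Poincar\'e inequality, the Grigor'yan--Saloff-Coste characterization gives the required two-sided Gaussian bounds on $h_t^\mu$ for all $t>0$ and $x,y \in B$.

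The main obstacle is the Poincar\'e inequality, because the weight $W_\mu$ degenerates on $\partial B$ and the geometry near the boundary is the non-trivial part: small $\distB$-balls centered at a boundary point are elongated along the sphere, and $\Lu$ has a degenerate second-order part in the normal direction. I expect the cleanest route to be to transfer the inequality from $S^d$ via the lift $\widetilde{\,\cdot\,}$, using the fact that the pushforward of $\W$ under $x\mapsto \widetilde x$ is (a constant multiple of) the $S^d$-surface measure restricted to even functions, with $W_\mu$ absorbed into the $(\mu-1/2)$-power of $\sin$ of the polar angle. On $S^d$ the Poincar\'e inequality is classical, and one then checks that the symmetrization between $S^d_+$ and $B$ does not spoil either doubling or the gradient estimate; this is where the extra care about the $\mu$-dependent weight enters. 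The rest of the argument---establishing the pointwise dominance $H_*^\mu f \lesssim M^\mu f$ from the Gaussian bound and concluding via the classical $A_p^\mu$ theory---is standard.
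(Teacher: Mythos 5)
Your proposal follows essentially the same route as the paper: reduce Theorem~\ref{thm:main} to the two-sided Gaussian heat-kernel bound (Theorem~\ref{thm:heatest}) plus the doubling property, deduce $H_*^\mu f \lesssim M^\mu f$, invoke the classical $A_p$ theory for the Hardy--Littlewood maximal function on a space of homogeneous type, and obtain the Gaussian bounds via the Dirichlet form machinery (strong locality, identification of the intrinsic distance with $\distB$, and a scale-invariant $L^2$ Poincar\'e inequality proved by lifting to the hemisphere $S^d_+$) together with the Grigor'yan--Saloff-Coste / Hebisch--Saloff-Coste characterization. The only small caveat is that $\Lu$ lifted to $S^d_+$ is not a \emph{bounded} perturbation of the spherical Laplacian for $\mu\neq 1/2$ (the drift $2\mu\, x_{d+1}^{-1}\,\sgrt x_{d+1}\cdot\sgrt$ degenerates at the equator), but since your argument runs through the Dirichlet form and the transfer identity rather than through perturbation theory, this does not affect the proof.
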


The next result deals with mixed norm estimates. Such an estimate was given in this setting by Ciaurri \cite[Theorem 3]{C} for the Poisson integral. Here we use the Muckenhoupt class $A_q(S^{d-1})$, $q>1$, of the $(d-1)$-dimensional unit sphere and also $A_p((0,1), d\lambdamu)$, $p>1$, which is defined in the interval $(0,1)$ with the Euclidean distance and the measure $d\lambdamu$ defined by the density
\[
d\lambdamu (r) = r^{d-1} (1-r^2)^{\mu-1/2} \, dr, \qquad r \in (0,1).
\]
 
\begin{thm}\label{thm:mixed}
Let $1<p, \, q<\infty$ and assume that $v \in A_q(S^{d-1})$ and $u \in A_p((0,1), d\lambdamu)$. Then we have for any measurable function $f$
\begin{align} \nonumber
& \int_0^1 \Big( \int_{S^{d-1}}
| H_*^{\mu} f (rx') |^q v(x') \, d\sigma(x') \Big)^{p/q} 
u(r) \, d\lambdamu (r) \\ \label{mix}
& \qquad
\le C
\int_0^1 \Big( \int_{S^{d-1}}
|f (rx') |^q v(x') \, d\sigma(x') \Big)^{p/q} 
u(r) \, d\lambdamu (r),
\end{align}
where the constant $C>0$ depends only on $d,\mu, p, q, v$ and $u$.
\end{thm}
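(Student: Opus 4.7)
The plan is to reduce the mixed-norm estimate \eqref{mix} to Theorem~\ref{thm:main} by Rubio de Francia extrapolation, once I know that suitable product weights on $B$ belong to the class $A_s^\mu$.

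The first step is an auxiliary geometric lemma: if $\omega \in A_s((0,1), d\lambdamu)$ and $v \in A_s(S^{d-1})$, then the tensor-product weight $w(x) := \omega(|x|)\, v(x/|x|)$ lies in $A_s^\mu$, with $[w]_{A_s^\mu}$ controlled by $[\omega]_{A_s}[v]_{A_s}$. To prove this I would use a description of $\distB$-balls: since $\distB$ is the pull-back under $x \mapsto \widetilde x$ of the geodesic distance on $S^d$, a ball $B_{\distB}(x_0,\delta)$ projects to a set comparable to the product of a radial interval of length $\sim \delta$ around $|x_0|$ and a spherical cap of angular radius $\sim \delta/\sqrt{1-|x_0|^2+\delta^2}$ around $x_0/|x_0|$. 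Writing $\W = d\lambdamu(r)\, d\sigma(x')$ in polar coordinates, the two integrals appearing in the $A_s^\mu$ condition over such a ball factor (up to constants) into the corresponding $A_s$ averages on the interval and on the cap, which yields the claim.

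Given this lemma, the diagonal case $p=q$ of \eqref{mix} is immediate from Theorem~\ref{thm:main}: for $u \in A_q((0,1), d\lambdamu)$ and $v \in A_q(S^{d-1})$, the weight $w(x) := u(|x|)v(x/|x|)$ lies in $A_q^\mu$, so
\[
\int_B |H_*^\mu f|^q\, w\, \W \le C \int_B |f|^q\, w\, \W,
\]
and unfolding $\W$ in polar coordinates turns this into \eqref{mix} with $p = q$.

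To pass from the diagonal case to $p \neq q$, fix $v \in A_q(S^{d-1})$ and let $X = L^q(S^{d-1}, v\, d\sigma)$; this is a UMD space for $1<q<\infty$. Viewing $H_*^\mu$ as a sublinear operator on $X$-valued functions of the radial variable through $F(r)(x') := f(rx')$, the diagonal case gives
\[
\|H_*^\mu f\|_{L^q(\omega\, d\lambdamu;\, X)} \le C \|f\|_{L^q(\omega\, d\lambdamu;\, X)}
\]
for every $\omega \in A_q((0,1), d\lambdamu)$, with constant depending only on $[\omega]_{A_q}$ (and on $v$). The Banach-valued version of Rubio de Francia's extrapolation theorem, valid for UMD targets, then promotes this to
\[
\|H_*^\mu f\|_{L^p(u\, d\lambdamu;\, X)} \le C \|f\|_{L^p(u\, d\lambdamu;\, X)}
\]
for all $1<p<\infty$ and $u \in A_p((0,1), d\lambdamu)$, which is exactly \eqref{mix}.

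The principal obstacle lies in the geometric lemma: $\distB$-balls near $\partial B$ are strongly anisotropic (elongated tangentially and compressed in the radial direction), and one must justify carefully that the product structure of $w$ nevertheless allows the $A_s^\mu$ average over such a ball to be split into separate radial and spherical factors. Once this step is in place, the diagonal case and the final extrapolation are routine applications of Theorem~\ref{thm:main} and standard Muckenhoupt machinery.
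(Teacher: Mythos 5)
Your overall strategy matches the paper's: establish a tensor-product weight lemma (the paper's Lemma~\ref{lem:weight}), obtain the diagonal case $p=q$ from Theorem~\ref{thm:main}, and extrapolate in the radial variable to reach general $p,q$. Two points need correction, one of which affects the substance of your argument.

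The geometric description of $\distB$-balls on which you base the weight lemma is wrong. For $B(x_0,\delta)$ with $\delta$ small, the radial extent is $\sim \delta\big(\sqrt{1-|x_0|^2}+\delta\big)$ and the angular radius is $\sim \delta/(|x_0|+\delta)$; near the boundary such a ball is compressed \emph{radially}, not angularly. Your formulas (radial length $\sim\delta$, angular radius $\sim\delta/\sqrt{1-|x_0|^2+\delta^2}$) still describe a set containing $B(x_0,\delta)$, but the comparison $\lambdamu(I)\sigma(Q)\lesssim W_\mu(B(x_0,\delta))$ needed for the $A_p^\mu$ estimate fails: with your choices the ratio is $\sim|x_0|^{d-1}(1-|x_0|^2)^{-d/2}$, unbounded as $|x_0|\to 1$, so the $A_p$ averages no longer split. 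It looks like $|x_0|$ and $\sqrt{1-|x_0|^2}$ got interchanged. With the corrected sets the lemma holds, and that is what the paper verifies case by case.

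The extrapolation step does not need UMD or Banach-valued theory, and invoking it obscures the structure. For fixed $v$ and $q$, apply scalar Rubio de Francia extrapolation in the pairs-of-functions form (as in \cite[Theorem~3.1]{Duo1}, adapted to the space of homogeneous type $((0,1),d\lambdamu)$) to the pairs $(F_f,G_f)$ with $F_f(r)=\big(\int_{S^{d-1}}|H_*^\mu f(rx')|^q v\,d\sigma\big)^{1/q}$ and $G_f(r)=\big(\int_{S^{d-1}}|f(rx')|^q v\,d\sigma\big)^{1/q}$. The diagonal case gives $\|F_f\|_{L^q(u\,d\lambdamu)}\lesssim\|G_f\|_{L^q(u\,d\lambdamu)}$ for all $u\in A_q((0,1),d\lambdamu)$, and extrapolation bumps the outer exponent to any $p\in(1,\infty)$. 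Phrasing this through $X=L^q(v\,d\sigma)$-valued norms is cosmetically the same computation applied to $\|\cdot\|_X$, and the UMD property of $X$ plays no role; the paper uses exactly this scalar pairs argument.
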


Notice that because of the well-known subordination principle, Theorems~\ref{thm:main} and \ref{thm:mixed} imply similar results for the maximal operator $P_*^{\mu} f = \sup_{t > 0} |P_t^\mu f |$ based on the Poisson semigroup
\[
P_t^\mu f (x) = \int_0^\infty H_{t^2/(4u)}^\mu f (x) \frac{e^{-u} \, du}{\sqrt{\pi u}},
\qquad x \in B, \quad t>0.
\]

\begin{rem}\label{rem:C}
\textnormal{As verified in Section~\ref{sec:mixed},
Theorem~\ref{thm:mixed} implies \cite[Theorem 3]{C}, and improves it in several ways. Our operator $H_*^{\mu}$ is defined in a classical way whereas Ciaurri considers the supremum of spherical $L^2$ means of a slightly different Poisson semigroup (our maximal operator $H_*^{\mu}$ dominates the one from \cite{C}). This essentially reduces the problem in \cite{C} to the context of one-dimensional Jacobi expansions and some vector-valued inequalities obtained there. Furthermore, we eliminate the restriction on the type parameter $\mu > 0$ appearing in \cite{C} and consider all admissible $\mu > -1/2$ in this setting.}
\end{rem}

To prove Theorems~\ref{thm:main} and \ref{thm:mixed}, we use the following result, which says that the heat kernel $h_t^\mu (x,y)$ has Gaussian bounds.

\begin{thm}\label{thm:heatest} 
There exist constants $C_1,C_2,c_1,c_2 > 0$ depending only on $d$ and $\mu$ such that
\begin{align*}
\frac{C_1}{ W_{\mu} \big( B(x,\sqrt{t}) \big) }
\exp\bigg( -\frac{c_1 \distB(x,y)^2}{t} \bigg)
\le
h_t^\mu (x,y)
\le
\frac{C_2}{ W_{\mu} \big( B(x,\sqrt{t}) \big) }
\exp\bigg( -\frac{c_2 \distB(x,y)^2}{t} \bigg)
\end{align*}
for all $t>0$ and $x,y \in B$.
\end{thm}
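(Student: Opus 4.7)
The plan is to apply the general theorem of Grigor'yan and Saloff-Coste (in the Dirichlet form formulation of Sturm) which, on a metric measure space of homogeneous type equipped with a strongly local regular Dirichlet form, characterizes two-sided Gaussian heat kernel bounds by the conjunction of volume doubling and a scale-invariant $L^2$ Poincar\'e inequality. Doubling of $(B,\W,\distB)$ is already granted by Lemma~\ref{lem:doub}, so two items remain: identifying the Dirichlet form of $\Lu$ and proving the Poincar\'e inequality.

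For the Dirichlet form, integration by parts in the definition of $\eL$, using that the boundary factor $(1-|x|^2)^{\mu+1/2}$ vanishes on $\partial B$, yields for $f,g\in C^2(B)$
\[
\langle \eL f, g\rangle_{W_\mu}
=\int_B \Bigl[\langle \gr f, \gr g\rangle - \langle x, \gr f\rangle \langle x, \gr g\rangle\Bigr]\, \W.
\]
The right-hand side defines a symmetric non-negative closable form $\mathcal{E}_\mu$ whose Friedrichs extension is exactly $\Lu$, with carr\'e du champ
\[
\Gamma_\mu(f,f)(x) = |\gr f(x)|^2 - \langle x, \gr f(x)\rangle^2.
\]
Geometrically, under the lift $x\mapsto \widetilde x=(x,\sqrt{1-|x|^2})\in S_+^d$, $\Gamma_\mu(f,f)$ is precisely $|\gr_{S^d}\widetilde f(\widetilde x)|^2$ with $\widetilde f(\widetilde x):= f(x)$, so the whole setup is isometric to $S_+^d$ equipped with the weighted surface measure $|y_{d+1}|^{2\mu}\, d\sigma$ and the intrinsic spherical gradient; in particular $\mathcal{E}_\mu$ is strongly local and regular.

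The main step is the Poincar\'e inequality
\[
\int_{\{y:\distB(x_0,y)<r\}} \bigl|f - f_{x_0,r}\bigr|^2\, \W
\le C r^2 \int_{\{y:\distB(x_0,y)<\kappa r\}} \Gamma_\mu(f,f)\, \W
\]
for some $\kappa\ge 1$, uniformly in $x_0\in B$ and $r>0$, with $f_{x_0,r}$ denoting the weighted average. I would work on the hemisphere side. For balls that stay a definite spherical distance away from the equator $\{y_{d+1}=0\}$ the weight is bounded above and below, and the classical Poincar\'e inequality on $S^d$ applies. For balls reaching the equator, extending $f$ evenly across it turns the inequality into a weighted Poincar\'e inequality on $S^d$ with the symmetric weight $|y_{d+1}|^{2\mu}$, which in a boundary layer is comparable to $|\theta|^{2\mu}$ in the latitude $\theta$. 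In that layer one freezes the tangential variables and applies the one-dimensional weighted Poincar\'e inequality for $\sin^{2\mu}\theta\, d\theta$, which holds for every $\mu>-1/2$ via a direct Hardy-type argument; integrating over the tangential variables by Fubini, while using that $\distB$-balls near the equator factor up to bounded constants into a latitude interval times an equatorial patch, produces the full inequality. Combining doubling, the Dirichlet form identification, and this Poincar\'e inequality, Sturm's theorem delivers both the upper and lower Gaussian bounds of Theorem~\ref{thm:heatest}. The principal obstacle is precisely the boundary case of the Poincar\'e inequality: the Euclidean metric on $B$ is not comparable to $\distB$ near $\partial B$, and it is exactly the choice of $\distB$ as the geodesic distance on $S^d$ that matches the degeneracy of $W_\mu$ and renders the estimate scale-invariant uniformly across all $x_0$ and $r$.
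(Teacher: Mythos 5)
Your proposal follows essentially the same route as the paper: identify $\Lu$ with the self-adjoint operator generated by a regular, strongly local Dirichlet form, compute the carr\'e du champ, establish doubling and a scale-invariant $L^2$ Poincar\'e inequality, and invoke the parabolic Harnack/Gaussian characterization of Hebisch--Saloff-Coste (Sturm's formulation is equivalent in this setting). Your Cartesian expression $\Gamma_\mu(f,f)=|\nabla f|^2-\langle x,\nabla f\rangle^2$ agrees with the paper's spherical form $(1-|x|^2)(\partial_r f)^2+|x|^{-2}|\nabla_0 f|^2$, and the transfer to $S_+^d$ via the lift $x\mapsto\widetilde x$ is exactly the paper's Lemma~\ref{lem:transfer}(a). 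The near-equator Poincar\'e argument you sketch --- product structure of $\distB$-balls near $\partial B$ (latitude interval times equatorial patch), Poincar\'e on $S^{d-1}$ tangentially, a one-dimensional weighted Poincar\'e for $\sin^{2\mu}\theta\,d\theta$ latitudinally --- is the same decomposition as the paper's Lemma~\ref{lem:Poin}; the even reflection across the equator is a cosmetic variant of working directly on $c^+(z,r)$ via the sandwiching rectangles $R(z,\tau r)$, and does not change the core one-dimensional weighted estimate you still must supply.

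The genuine gap is the identification of the intrinsic distance. You move directly from ``the setup is isometric to $S_+^d$'' to applying the characterization theorem in the metric $\distB$, but the Hebisch--Saloff-Coste/Sturm theorem yields Gaussian bounds in terms of the intrinsic distance $\distI(x,y)=\sup\{f(x)-f(y): f\in D(\overline{\mathcal{E}})\cap C(B),\ \Gamma(f,f)\le 1\ \text{a.e.}\}$, and it must be checked that $\distI=\distB$. This is not automatic: one direction ($\distI\le\distB$) requires showing that any $f$ with $\Gamma(f,f)\le 1$ a.e.\ is $\distB$-Lipschitz with constant $1$, which needs a careful mollification argument near the singular sphere $\{0\}$ and the boundary (the paper's Lemma~\ref{lem:transfer}(b)); the other direction ($\distI\ge\distB$) requires constructing admissible test functions in $D(\overline{\mathcal{E}})\cap C(B)$ with $\Gamma(f,f)\le 1$ whose increments realize $\distB$ --- the paper does this with the regularized distance functions $f_{\delta,\eps}$ in the proof of Lemma~\ref{lem:dist}. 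Without this step, the Gaussian bounds you obtain are in a metric you have not linked to $\distB$, and the conclusion of Theorem~\ref{thm:heatest} does not follow.
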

 
Observe that since $(B,\W,\distB)$ is a space of homogeneous type, Theorem~\ref{thm:heatest} combined with standard arguments implies that for non-negative functions $f$ the heat semigroup maximal function $H_*^\mu f$ is comparable with the Hardy-Littlewood maximal function. Thus Theorem~\ref{thm:main} follows from Theorem~\ref{thm:heatest}, as a consequence of the general theory of spaces of homogeneous type.

Theorem~\ref{thm:heatest} was proved by Kerkyacharian, Petrushev and Xu very recently in \cite{KPX1} and \cite{KPX2} (in \cite{KPX2} under the restriction $\mu \ge 0$). Nevertheless, we give another proof of this result, which is of independent interest, and which was established independently of \cite{KPX1,KPX2}. Our proof is different from the one in \cite{KPX1} despite the fact that both proofs use Dirichlet form methods. 
Indeed, our approach is more explicit and uses more concrete analysis and geometry. 
We show that the general theory of Dirichlet forms from \cite{CKP} and \cite{HS} applies.
In particular, we determine explicitly the energy measure $\Gamma$ (see \eqref{iden2}) and the intrinsic distance $\distI$ (see Lemma \ref{lem:dist}), and we do not use the result of Gyrya and Saloff-Coste in \cite[Theorem 3.34]{GS}, which is one of the crucial tools in \cite{KPX1}. Finally, we also have explicit formulas for the orthogonal polynomials and the so-called Friedrichs extension of $\eL$.

The paper is organized as follows. In Section~\ref{sec:Dir} we introduce the framework of a Dirichlet form based on $\Lu$, which allows us to reduce the proof of Theorem~\ref{thm:heatest} to computing the corresponding intrinsic distance (Lemma~\ref{lem:dist}) and showing the related Poincar\'e inequality (Theorem~\ref{thm:Poincare}). 
Section~\ref{sec:distance} contains preparatory results which lead to the proof of Lemma~\ref{lem:dist}, and 
Section~\ref{sec:Poincare} is devoted to the proof of Theorem~\ref{thm:Poincare} and some technical lemmas needed there. In Section~\ref{sec:mixed} we prove Theorem~\ref{thm:mixed} and verify that it implies \cite[Theorem 3]{C}.
Finally, in the appendix we prove the estimate \eqref{Qest}.

\textbf{Notation.}
The terminology pertaining to the Dirichlet forms is adopted from \cite{FOT}; the only exception here is a different normalization of the energy measure, taken from \cite[p.\,1441]{HS} (see also the statement of Lemma~\ref{lem:Dirform} below). 
For the sake of clarity, we now explain some
symbols.
In this paper we consider only real-valued functions. In particular, by  
$\langle f,g \rangle_{W_\mu}$ we mean $\int_{B} f(x) g(x) \, \W (x)$ whenever the integral makes sense. 

Furthermore, we denote
\begin{align*}
S^{d-1} & = \{ x \in \R : |x| = 1 \}, 
\qquad \textrm{($(d-1)$-dimensional unit sphere)}\\
S^d & = \{ x \in \mathbb{R}^{d+1} : |x| = 1 \}, 
\qquad \textrm{($d$-dimensional unit sphere)}\\
S^d_+ & = S^d \cap \{ x \in \mathbb{R}^{d+1} : x_{d+1} \ge 0 \}, 
\qquad \textrm{(hemisphere)}\\
\langle \cdot , \cdot \rangle & \equiv \textrm{inner product in $\R$ or $\RR$ (this will be clear from the context)},\\
\dist(x,y) & = \arccos \langle x , y \rangle, \qquad x,y \in S^d, \quad 
\textrm{(geodesic distance on $S^d$)}\\
\distB(x,y) & =  \arccos 
\Big(\langle x , y \rangle + \sqrt{1 - |x|^2} \sqrt{1 - |y|^2}\Big), \qquad x,y \in B, \quad 
\textrm{(geodesic distance on $B$)}\\
c(x,r) & = \{ y \in S^d : \dist(x,y) < r\}, \qquad x \in S^d, \quad r>0, \quad 
\textrm{(spherical cap)}\\
c^+(x,r) & = c(x,r) \cap S^d_+, \qquad x \in S^d, \quad r>0,\\
\interior B & = \{ x \in \R : |x| < 1 \}, 
\qquad \textrm{($d$-dimensional open unit ball)}\\
B(x,r) & = \{ y \in B : \distB(x,y) < r\}, \qquad x \in B, \quad r>0, \quad 
\textrm{(ball with respect to $\distB$)}\\
e_j & \equiv \textrm{$j$th coordinate vector},\\
\partial_j & = \partial \slash \partial x_j, \qquad 
	\textrm{($j$th partial derivative)}\\
\gr & \equiv \textrm{gradient in $\R$ or $\RR$ (this will be clear from the context)},\\
\sgr & \equiv \textrm{spherical gradient on $S^{d-1}$ or in $\R$},\\
\sgrt & \equiv \textrm{spherical gradient on $S^{d}$},\\
d\sigma & \equiv \textrm{area measure on $S^{d-1}$ or $S^{d}$ 
(this will be clear from the context)}.
\end{align*}

When writing estimates, we will use the notation $X \lesssim Y$ to
indicate that $X \le C Y$ with a positive constant $C$ depending only on $d$ and $\mu$. 
We shall write $X \simeq Y$ when simultaneously $X \lesssim Y$ and $Y \lesssim X$.

\textbf{Acknowledgments.} The authors would like to thank Luz Roncal for drawing our attention to the article \cite{Duo1}. This allowed us to prove Theorem~\ref{thm:mixed} via an extrapolation theorem.

\vspace{0,5cm}
\section{Dirichlet forms approach} \label{sec:Dir}
In this section we show that $\Lu$ is the so-called Friedrichs extension of $\eL$. 
We start by defining the radial derivative and the spherical gradient, and determine the corresponding adjoint operators. 
For $f \in C^1(B)$ we define
\begin{align}\label{radder}
\partial_r f(x) = \sum_{j=1}^d \frac{x_j}{|x|} \, \partial_j f(x), 
\qquad x \in B\setminus \{0\}.
\end{align}
Clearly 
\[
\partial_r f(r \xi) = 
\frac \partial   {\partial r}\, f(r\xi),
\qquad 0<r \le 1, \quad \xi \in S^{d-1},
\]
and for any $\vp \in C_c^1(\interior B)$ integration by parts gives
\begin{align*}
&\int_B \partial_r f(x) \vp(x) \, dx \\
&  \quad =
\int_0^1 r^{d-1} \int_{S^{d-1}} \partial_r \big[ f(r\xi) \big] \vp(r\xi) 
\, d\sigma(\xi) \, dr \\
&  \quad =
\int_{S^{d-1}} \bigg[ r^{d-1} f(r\xi) \vp(r\xi) \Big|_{r=0}^{r=1}  
- \int_0^1 \Big( r^{d-1} \partial_r \big[ \vp(r\xi) \big] + (d-1) r^{d-2} \vp(r\xi)  \Big) f(r\xi) \, dr 
\bigg] d\sigma(\xi).
\end{align*}
Since $\vp = 0$ on $S^{d-1}$, the integrated term vanishes and we obtain 
\begin{align} \label{iden20}
\int_B \partial_r f(x) \vp(x) \, dx
=
- \int_B f(x) \Big( \partial_r \vp(x) + \frac{d-1}{|x|} \vp(x) \Big) \, dx,
\qquad f \in C^1(B), \quad \vp \in C_c^1(\interior B).
\end{align}

For $f \in C^1(B)$, the spherical gradient can be defined by
\begin{align}\label{iden3}
\sgr f(x) = |x| \gr f(x) - x \partial_r f(x), \qquad x \in B \setminus \{ 0 \};
\end{align}
cf.\  \cite[(1.4.6)]{DaiXu}. Notice that the values of $\sgr f$ on a sphere
$\{|x| = r \}$ depend only on the restriction of $f$ to this sphere and that
 $\sgr$ can be seen as vector fields on these spheres. Integration by parts
leads to the identity 
\begin{equation}
  \label{intpart}
 \int_B \sgr f(x) \vp(x) \, dx
=
- \int_B f(x) \Big( \sgr \vp(x) - (d-1)\frac{x}{|x|} \vp(x) \Big) \, dx,
\qquad f, \vp \in C^1(B);
\end{equation}
see  \cite[(1.8.13)]{DaiXu}.

The formulas \eqref{iden20} and \eqref{intpart} motivate the following
definitions of the $L^2$ distribution extensions of the radial derivative $\partial_r$ and the spherical gradient $\sgr$.

\begin{defin}[$L^2$ radial derivative $\partial_r$]\label{def:r}
Let $f \in L^2_{\loc}(\interior B \setminus \{0\})$. We say that $\partial_r f$ exists and belongs to $L^2_{\loc}(\interior B \setminus \{0\})$ if there exists an $h \in L^2_{\loc}(\interior B \setminus \{0\})$ such that
\begin{align*}
\int_B h(x) \vp(x) \, dx
=
- \int_B f(x) \Big( \partial_r \vp(x) + \frac{d-1}{|x|} \vp(x) \Big) \, dx,
\qquad \vp \in C_c^1(\interior B \setminus \{ 0 \}).
\end{align*}
Since $h$ is unique if it exists, 
we denote it by $\partial_r f$.
\end{defin}

\begin{defin}[$L^2$ spherical gradient $\sgr$ in $B$] \label{def:sgr}
Let $f \in L^2_{\loc}(\interior B \setminus \{0\})$. We say that $\sgr f$ exists and belongs to $L^2_{\loc}(\interior B \setminus \{0\})$ if there exists a vector $h=(h_1,\ldots , h_d)$ such that $|h| \in L^2_{\loc}(\interior B \setminus \{0\})$ and 
\[
\int_B h(x) \vp(x) \, dx
=
- \int_B f(x) \Big( \sgr \vp(x) - (d-1)\frac{x}{|x|} \vp(x) \Big) \, dx,
\qquad \vp \in C_c^1(\interior B \setminus \{0\}).
\]
We write $\sgr f$ for $h$.
\end{defin}

We prove the following simple fact.
\begin{pro}\label{prop:Lsym}
$\eL$ is symmetric and non-negative on $C^2(B) \subset \L2$, and for $f,g \in C^2(B)$ we have
\begin{align}\label{iden1}
\langle \eL f ,g \rangle_{W_\mu} 
=
\int_B \Big( (1-|x|^2) \partial_r f(x) \partial_r g(x) + 
\frac{1}{|x|^2} \sgr f(x) \cdot \sgr g(x) \Big)
\, \W(x).
\end{align}
\end{pro}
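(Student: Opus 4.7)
The plan is to verify \eqref{iden1} directly by integration by parts in spherical coordinates; symmetry and non-negativity of $\eL$ on $C^2(B)$ are then immediate consequences. First I would rewrite $\eL$ in spherical coordinates. Using $\sum_i x_i\,\partial_i f = r\,\partial_r f$, $\sum_{i,j} x_i x_j\,\partial_i \partial_j f = r^2\,\partial_r^2 f$, and the standard decomposition $\Delta f = \partial_r^2 f + \frac{d-1}{r}\,\partial_r f + \frac{1}{r^2}\,\Delta_{S^{d-1}} f$ (where $r=|x|$ and $\Delta_{S^{d-1}}$ acts on each sphere of radius $r$), one finds
\[
\eL f = -(1-r^2)\,\partial_r^2 f - \Big(\frac{d-1}{r} - (2\mu+d)\,r\Big)\partial_r f - \frac{1}{r^2}\,\Delta_{S^{d-1}} f.
\]
Writing $\W = r^{d-1}(1-r^2)^{\mu-1/2}\,dr\,d\sigma(\xi)$, I would then treat the radial and spherical parts separately.

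For the radial part, integrating $-(1-r^2)\partial_r^2 f$ by parts once in $r$ against $r^{d-1}(1-r^2)^{\mu-1/2}$ produces $(1-r^2)\,\partial_r f\,\partial_r g$ times the weight, together with two first-order terms coming from differentiating $r^{d-1}(1-r^2)^{\mu+1/2}$. A direct computation shows that these extras combine with the $\big(\tfrac{d-1}{r}-(2\mu+d)r\big)\partial_r f$ contribution from $\eL$ and cancel exactly. The boundary terms vanish: at $r=1$ thanks to the factor $(1-r^2)^{\mu+1/2}$ (using $\mu>-1/2$), and at $r=0$ thanks to $r^{d-1}$ (using $d\ge 2$, together with boundedness of $\partial_r f$ on $B$ for $f\in C^1(B)$). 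The radial contribution is thus $\int_B (1-r^2)\,\partial_r f\,\partial_r g\,\W$.

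For the spherical part, \eqref{iden3} identifies $\sgr f(r\xi)$ with the intrinsic gradient on $S^{d-1}$ of the restriction $\xi \mapsto f(r\xi)$, as this is precisely the tangential component of $\gr f$ at $r\xi$ rescaled by the radius. Classical integration by parts on the closed manifold $S^{d-1}$ then gives, for each $r\in(0,1]$,
\[
-\int_{S^{d-1}} \Delta_{S^{d-1}} f(r\xi)\, g(r\xi)\, d\sigma(\xi) = \int_{S^{d-1}} \sgr f(r\xi) \cdot \sgr g(r\xi)\, d\sigma(\xi),
\]
and multiplying by $r^{d-3}(1-r^2)^{\mu-1/2}$ and integrating in $r$ yields the spherical term $\int_B |x|^{-2}\,\sgr f \cdot \sgr g\,\W$. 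Integrability near $x=0$ holds because $|\sgr f(x)| \lesssim |x|\,\|\gr f\|_\infty$ for $f \in C^1(B)$.

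Adding the two contributions establishes \eqref{iden1}. Since the right-hand side is symmetric in $(f,g)$, $\eL$ is symmetric on $C^2(B)$; setting $g=f$ makes the integrand pointwise non-negative, giving non-negativity. The main obstacle is the arithmetic in the radial integration by parts: one must verify that the coefficients produced by differentiating the weight telescope precisely against the first-order term $\big(\tfrac{d-1}{r}-(2\mu+d)r\big)\partial_r f$ of $\eL$, so that only the symmetric divergence-type piece $(1-r^2)\,\partial_r f\,\partial_r g\,\W$ survives.
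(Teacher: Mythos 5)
Your proposal follows essentially the same route as the paper's proof: rewrite $\eL$ in spherical coordinates via $\Delta = \partial_r^2 + \frac{d-1}{r}\partial_r + \frac{1}{r^2}\Delta_0$ and $\sum x_i x_j \partial_i\partial_j = r^2\partial_r^2$, integrate by parts once in $r$ (observing the first-order terms cancel), and use Green's identity on $S^{d-1}$ for the spherical Laplacian. The only difference is that you spell out the boundary and integrability considerations at $r=0$ and $r=1$ a bit more explicitly, which the paper passes over silently; the underlying argument is the same.
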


\begin{proof}
Clearly, it suffices to show \eqref{iden1}. From \cite[(1.4.2)]{DaiXu} and \eqref{radder} we have the identities 
\begin{align*}
\Delta h & = 
\partial_r^2 h + \frac{d-1}{r} \partial_r h + \frac{1}{r^2} \Delta_{0} h,
\qquad h \in C^2(B),\\
\sum_{i,j=1}^d x_i x_j \partial_i \partial_j h(x) \Big|_{x=r\xi} & =
r^2  \partial_r^2 h (r\xi), 
\qquad h \in C^2(B),
\qquad 0<r \le 1, \quad \xi \in S^{d-1},
\end{align*}
where $\Delta_{0}$ is the 
spherical part of the Laplacian in $\R$. Note that $\Delta_{0} f (r\xi)$ is the same as what we get if the 
Laplace-Beltrami operator on $S^{d-1}$, also denoted by $\Delta_{0}$, act in the $\xi$ variable on $f (r\xi)$.
Using the above formulas we obtain
\[
\eL f(r\xi) = 
- \partial_r^2 f (r\xi) - \frac{d-1}{r} \partial_r f (r\xi) 
- \frac{1}{r^2} \Delta_{0} f (r\xi) + r^2  \partial_r^2 f (r\xi)
+ (2\mu + d) r  \partial_r f (r\xi).
\]
Consequently, integrating in spherical coordinates we get
\begin{align*}
\langle \eL f ,g \rangle_{W_\mu} &  =
\int_0^1 \int_{S^{d-1}} r^{d-1} \Big( 
-(1-r^2) \partial_r^2 f (r\xi) - \frac{d-1}{r} \partial_r f (r\xi) 
- \frac{1}{r^2} \Delta_{0} f (r\xi) \\
& \qquad \qquad \qquad \qquad \qquad
+ (2\mu + d) r  \partial_r f (r\xi)
\Big) 
g(r\xi) (1-r^2)^{\mu - 1/2} \, d\sigma(\xi) \, dr.
\end{align*}
Integration by parts gives
\begin{align*}
& - \int_0^1 r^{d-1} 
(1-r^2)^{\mu + 1/2} \partial_r^2 f (r\xi) 
g(r\xi)   \, dr \\
& \quad =
\int_0^1 r^{d-1} 
\partial_r f (r\xi) \partial_r g(r\xi)  (1-r^2)^{\mu + 1/2}  \, dr \\
& \qquad + 
\int_0^1 r^{d-1} \Big( \frac{d-1}{r} (1-r^2) - (2\mu + 1)r \Big)
\partial_r f (r\xi) g(r\xi) (1-r^2)^{\mu - 1/2} \, dr.
\end{align*}
This together with the identity, see \cite[(1.8.14)]{DaiXu},
\begin{align*}
\int_{S^{d-1}} \Delta_{0} \vp \, \psi \, d\sigma
=
- \int_{S^{d-1}} \sgr \vp \cdot \sgr \psi \, d\sigma, 
\qquad \vp,\psi \in C^2(S^{d-1}),
\end{align*}
leads to
\begin{align*}
\langle \eL f ,g \rangle_{W_\mu} &  =
\int_0^1 \int_{S^{d-1}} r^{d-1} \partial_r f (r\xi) \partial_r g (r\xi) 
(1-r^2)^{\mu + 1/2} \, d\sigma(\xi) \, dr\\
& \quad + 
\int_0^1 \int_{S^{d-1}} r^{d-3} \sgr f (r\xi) \cdot \sgr g (r\xi) 
(1-r^2)^{\mu - 1/2} \, d\sigma(\xi) \, dr.
\end{align*}
The identity \eqref{iden1} is justified, and the proof of Proposition~\ref{prop:Lsym} is complete.
\end{proof}

We intend to construct a Dirichlet form based on $\eL$ to which the framework described in \cite[Section 1.2]{CKP} and \cite[Section 2]{HS} applies. Let
\[
\mathcal{E} (f,g) := \langle \eL f , g \rangle_{W_{\mu}},
\qquad D( \mathcal{E} ) = C^2(B) \subset \L2.
\] 
Then we know from \cite[Exercise 1.1.2]{FOT} that 
$(\mathcal{E},  D( \mathcal{E} ) )$ is a closable symmetric form on $\L2$.
Moreover, using Proposition~\ref{prop:Lsym}, it can easily be proved that
\[
\overline{ \mathcal{E} } (f,g)
=
\int_B \Big( (1-|x|^2) \partial_r f(x) \partial_r g(x) + 
\frac{1}{|x|^2} \sgr f(x) \cdot \sgr g(x) \Big)
\, \W(x)
\]
defined on
\begin{align*}
D( \overline{ \mathcal{E} } )
=
&\Big\{ f \in \L2 : \sqrt{1 - |x|^2} \, \partial_r f, \frac{1}{|x|} |\sgr f|  \in \L2 \textrm{ and there exist $f_n \in C^2(B)$} \\
& \,\, \textrm{such that } f_n \to f,
\sqrt{1 - |x|^2} \, \partial_r (f_n - f) \to 0 \textrm{ and } \frac{1}{|x|}|\sgr(f_n - f)| \to 0 \textrm{ in $\L2$}
\Big\}
\end{align*}
is the closure of $(\mathcal{E},  D( \mathcal{E} ) )$; here the derivatives are taken in the $L^2$ sense, see Definitions~\ref{def:r} and \ref{def:sgr}.
By virtue of \cite[Theorem 1.3.1]{FOT}, the closed symmetric form $( \overline{ \mathcal{E} },  D( \overline{ \mathcal{E} } ) )$ gives rise to a self-adjoint operator, say $A$. 
Then, by \cite[Lemma 3.3.1]{FOT} we see that $-A$ 
is a self-adjoint and non-negative extension of $\eL$. It is called the Friedrichs extension. 
On the other hand, $\Lu$ as defined in \eqref{def:Lser} is the unique self-adjoint extension of $\eL$, so it must coincide with the Friedrichs extension of $\eL$.

Next, we show that the symmetric form 
$( \overline{ \mathcal{E} },  D( \overline{ \mathcal{E} } ) )$
on $\L2$ fits into the setting described in \cite[Section 2.1]{HS}.
Observe that $(B,\W)$ is a connected compact separable space and $\W$ is a non-negative Radon measure on $B$ with full support.

\begin{lem}\label{lem:Dirform}
The bilinear symmetric form $( \overline{ \mathcal{E} },  D( \overline{ \mathcal{E} } ) )$
is a regular, strongly local Dirichlet form with the associated energy measure  given by
\begin{align*}
d \Gamma (f,g) & = 
\Gamma (f,g) \, \W, \qquad f,g \in D( \overline{ \mathcal{E} } ),
\end{align*}
where the density is
\begin{align} \label{iden2}
\Gamma (f,g) (x) & = 
(1-|x|^2) \partial_r f(x) \partial_r g(x) 
+ \frac{1}{|x|^2} \sgr f(x) \cdot \sgr g(x), 
\qquad x\in B, \quad f,g \in D( \overline{ \mathcal{E} } ).
\end{align}
Here $d \Gamma (f,g)$ is normalized in a such way that 
$\overline{ \mathcal{E} } (f,g) = \int_B \Gamma (f,g) \, \W$ for $f,g \in D( \overline{ \mathcal{E} } )$.
\end{lem}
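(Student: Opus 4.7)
The plan is to verify in turn the four properties claimed: $(\overline{\mathcal{E}}, D(\overline{\mathcal{E}}))$ is a Dirichlet form (closed, densely defined, Markovian), regular, strongly local, and its energy measure in the sense of \cite[p.~1441]{HS} is $\Gamma(f,g) \, dW_\mu$ with $\Gamma$ as in \eqref{iden2}. Closedness is built into the construction as the closure of the closable form $(\mathcal{E}, C^2(B))$, density of the domain is immediate from $C^2(B) \subset D(\overline{\mathcal{E}})$, and Proposition~\ref{prop:Lsym} together with the definition of $D(\overline{\mathcal{E}})$ already yields the integral representation $\overline{\mathcal{E}}(f,g) = \int_B \Gamma(f,g)\, dW_\mu$, so only the Markovian, regularity, locality, and energy-measure identification are left.

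For the Markovian property I would first take $f \in C^2(B)$ and a smooth normal contraction $T\colon \mathbb{R}\to\mathbb{R}$ with $T(0)=0$. The classical chain rule gives $\partial_r(T\circ f) = T'(f)\,\partial_r f$ and $\sgr(T\circ f) = T'(f)\,\sgr f$ pointwise on $B\setminus\{0\}$, hence $\Gamma(T\circ f, T\circ f) = (T'(f))^2\,\Gamma(f,f) \le \Gamma(f,f)$, and integration yields $\overline{\mathcal{E}}(T\circ f,T\circ f)\le \overline{\mathcal{E}}(f,f)$. Approximating arbitrary normal contractions by smooth ones and arbitrary $f\in D(\overline{\mathcal{E}})$ by the sequences $f_n\in C^2(B)$ from the definition, and using closedness of $\overline{\mathcal{E}}$ together with $|T(f_n)-T(f)|\le|f_n-f|$ and the bound $|T'|\le 1$, extends the inequality to the whole domain. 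Regularity is transparent since $B$ is compact: polynomials lie in $D(\overline{\mathcal{E}}) \cap C(B)$, they are uniformly dense in $C(B)$ by Stone--Weierstrass, and form-norm dense in $D(\overline{\mathcal{E}})$ by its very definition. Strong locality is immediate from \eqref{iden2}: if $f$ is constant on a neighborhood $U$ of $\supp g$, then $\partial_r f$ and $\sgr f$ vanish on $U$ (in the $L^2$ sense), while $g$ and its derivatives vanish off $\supp g$, so $\Gamma(f,g)\equiv 0$ on $B$.

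The last step, and the one I expect to require the most care, is the identification of $\Gamma(f,g)\,dW_\mu$ as the energy measure in the HS normalization. Concretely I would verify the bilinear identity
\[
\int_B \phi \, \Gamma(f,f)\, dW_\mu = \overline{\mathcal{E}}(f,f\phi) - \tfrac{1}{2}\,\overline{\mathcal{E}}(f^2,\phi)
\]
for bounded $f \in D(\overline{\mathcal{E}})$ and bounded $\phi \in D(\overline{\mathcal{E}}) \cap C(B)$. For $f,\phi\in C^2(B)$ this reduces to the pointwise product rules for $\partial_r$ and $\sgr$ inserted into \eqref{iden1}, after which both sides agree by direct expansion. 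The obstacle is the passage to general bounded $f,\phi \in D(\overline{\mathcal{E}})$: the weights $(1-|x|^2)$ and $|x|^{-2}$ degenerate at $S^{d-1}$ and at the origin, and one needs $f^2$ and $f\phi$ to lie in $D(\overline{\mathcal{E}})$ with the expected product-rule derivatives. The integrability conditions built into the definition of $D(\overline{\mathcal{E}})$, namely $\sqrt{1-|x|^2}\,\partial_r f,\ |x|^{-1}|\sgr f|\in L^2(B,dW_\mu)$, combined with the $L^\infty$ bound on $f$, are exactly what is needed to justify the truncation and approximation argument and to pass to the limit in each bilinear term.
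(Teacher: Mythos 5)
Your proposal is correct and follows the same skeleton as the paper (Markov property via the chain rule, regularity, strong locality, identification of the energy measure via the FOT bilinear identity $\int\phi\,\Gamma(f,f)\,\W = \overline{\mathcal{E}}(f,f\phi)-\tfrac12\overline{\mathcal{E}}(f^2,\phi)$), but it takes a heavier route at two points. The paper verifies Markovianity and strong locality for the \emph{pre-closure} form $(\mathcal{E},C^2(B))$ and then invokes \cite[Theorem~3.1.2 and Exercise~3.1.1]{FOT} to pass to the closure in one stroke; you instead work directly on $(\overline{\mathcal{E}},D(\overline{\mathcal{E}}))$ and carry out the approximation by hand, which is legitimate but essentially reproves that FOT result. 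Likewise, for the energy-measure identification the paper records the Cauchy--Schwarz bound $\|\Gamma(f,g)\|_{L^1(B,\W)}\le\sqrt{\overline{\mathcal{E}}(f,f)\,\overline{\mathcal{E}}(g,g)}$, which makes $(f,g)\mapsto\Gamma(f,g)$ a continuous bilinear map from the form domain into $L^1(B,\W)$; since the abstract energy measure has the same continuity, it suffices to match the two on the core $C^2(B)$, sidestepping the truncation argument for general bounded $f,\phi\in D(\overline{\mathcal{E}})$ that you rightly anticipate to be the delicate step. One small slip in your regularity argument: polynomials being form-norm dense in $D(\overline{\mathcal{E}})$ is not ``by definition'' --- the definition gives density of $C^2(B)$ only, and polynomial form-density would need a separate argument; simply use $C^2(B)$ as the core (uniformly dense in $C(B)$ since $B$ is compact) and the argument closes immediately. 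Finally, the paper's strong locality check is even more direct than yours: on the pre-closure form, $\mathcal{E}(f,g)=\langle \eL f,g\rangle_{W_\mu}=0$ because $\eL f\equiv 0$ on $U$ and $g\equiv 0$ off $\supp g$, with no appeal to the formula \eqref{iden2}.
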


This means that we use the normalization of $\Gamma$ from \cite{CKP} and \cite{HS}, which differs slightly from that of \cite{FOT}.

\begin{proof}[Proof of Lemma \ref{lem:Dirform}]
We first show that $( \overline{ \mathcal{E} },  D( \overline{ \mathcal{E} } ) )$
is a regular, strongly local Dirichlet form. Because of 
\cite[Theorem 3.1.2 and Exercise 3.1.1]{FOT}, it suffices to verify that 
$( \mathcal{E} ,  D(  \mathcal{E}  ) )$ is strongly local and Markovian; note that the density property and the Urysohn-type result assumed in 
\cite[Theorem 3.1.2 (i) and (ii)]{FOT} are easy to check in our setting.

\textbf{Strong locality of $( \mathcal{E} ,  D(  \mathcal{E}  ) ) $.}
Let $f,g \in C^2(B)$ and let $U$ be an open neighbourhood of $\supp g$ on which $f$ is constant. Since $\eL f \equiv 0$ on $U$ (because $\partial_j f \equiv 0$ on $U$) and $g \equiv 0$ on $U^c$, we get $\mathcal{E} (f,g) = 0$.

\textbf{Markov property of $( \mathcal{E} ,  D(  \mathcal{E}  ) ) $.}
Let $\phi_\eps \colon \mathbb{R} \to [-\eps, 1 + \eps]$, $\eps > 0$, be  
smooth functions verifying $0 \le \phi_\eps'(t) \le 1$ for $t \in \mathbb{R}$ and
$\phi_\eps (t) = t$ for $t \in [0,1]$. It suffices to show that 
\[
\mathcal{E} (\phi_\eps \circ f, \phi_\eps \circ f)
\le \mathcal{E} (f, f), \qquad \eps > 0, \quad f \in C^2(B). 
\]
Clearly, $\phi_\eps \circ f \in C^2(B)$ and 
\begin{align*}
\partial_r \big( \phi_\eps \circ f \big) (x)
=
\phi_\eps' \big( f(x) \big) \, \partial_r f(x), 
\qquad
\sgr\big( \phi_\eps \circ f \big) (x)
=
\phi_\eps' \big( f(x) \big) \, \sgr f(x), \qquad
x \in B \setminus \{ 0 \}.
\end{align*}
From Proposition \ref{prop:Lsym}, the Markov property follows.

Finally, we consider the energy measure.  
Since 
$\Gamma (f,g)$, as defined in \eqref{iden2}, satisfies
\[
\| \Gamma (f,g) \|_{L^1(B,\W)}
\le
\sqrt{ \overline{ \mathcal{E} } (f,f) \, \overline{ \mathcal{E} } (g,g)}, 
\qquad f, g \in D( \overline{ \mathcal{E} } ),
\]
we see that the bilinear symmetric form 
$D( \overline{ \mathcal{E} } ) \times D( \overline{ \mathcal{E} } ) \ni (f,g)
\mapsto \Gamma (f,g) \in L^1(B,\W)$ is continuous. Thus to finish the proof, it is enough to compute $\Gamma (f,f)$ for $f \in C^2(B)$.
This can be done by checking that $\Gamma$ from \eqref{iden2} satisfies 
\[
\mathcal{E} (uf,u) - \mathcal{E} (u^2,f)/2 = \int_B f \, \Gamma (u,u) \, \W , 
\qquad u,f \in C^2(B);
\]
cf.\,\cite[(3.2.14) and Lemma 3.2.3]{FOT}, or by computing $\Gamma (f,f)$, $f \in C^2(B)$, as described in \cite[p.\,1000]{CKP}.
The details are left to the reader. 
\end{proof}

We define an intrinsic distance on $B$ by
\begin{align*}
\distI (x,y) = \sup \big\{ f(x) - f(y) : f \in D( \overline{ \mathcal{E} } ) \cap C(B), \, \Gamma (f,f) (z) \le 1 \textrm{ for a.a. $z \in B$} \big\},
\qquad x,y \in B.
\end{align*}
The following result will be proved in Section~\ref{sec:distance}.
\begin{lem}\label{lem:dist}
For all $x,y \in B$ we have $\distI (x,y) = \distB (x,y)$.
\end{lem}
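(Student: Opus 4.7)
The plan is to transport the Dirichlet form $\overline{\mathcal E}$ to the sphere $S^d$ via the lift $\Phi(x):=\widetilde x = (x,\sqrt{1-|x|^2})$, and to identify $\Gamma(f,f)(x)$ with the squared spherical gradient on $S^d$ of the even extension of $f$. Both inequalities $\distI \le \distB$ and $\distI \ge \distB$ then reduce to standard Lipschitz arguments on $S^d$, together with the basic identity $\distB(x,y) = \rho(\widetilde x,\widetilde y)$. The key algebraic computation is: for $f \in C^2(B)$, define $F\colon S^d\to\mathbb R$ by $F(z) := f(z_1,\ldots,z_d)$, so that $F(\widetilde x) = f(x)$ and $F$ is invariant under $z_{d+1}\mapsto -z_{d+1}$. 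Since $\gr F(\widetilde x) = (\gr f(x),0)\in\RR$, one has $\langle \widetilde x,\gr F(\widetilde x)\rangle = \langle x,\gr f(x)\rangle = |x|\partial_r f(x)$, whence $|\sgrt F(\widetilde x)|^2 = |\gr f(x)|^2 - |x|^2(\partial_r f(x))^2$. Using $\sgr f = |x|\gr f - x\partial_r f$ and $\langle x,\gr f\rangle = |x|\partial_r f$ one also checks $\frac{1}{|x|^2}|\sgr f|^2 = |\gr f|^2 - (\partial_r f)^2$, so the formula \eqref{iden2} for $\Gamma$ collapses to the same expression $|\gr f|^2 - |x|^2(\partial_r f)^2$, giving
\[
\Gamma(f,f)(x) = |\sgrt F(\widetilde x)|^2, \qquad x\in\interior B.
\]
By the graph-norm density of $C^2(B)$ in $D(\overline{\mathcal E})$, this identity extends to all $f \in D(\overline{\mathcal E})$, with $F$ the (continuous, even) extension of $f$ to $S^d$ and $\sgrt F$ understood in the distributional sense.

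For $\distI \le \distB$: if $f\in D(\overline{\mathcal E})\cap C(B)$ satisfies $\Gamma(f,f)\le 1$ a.e., then $F=F_f$ is continuous and symmetric on $S^d$ with $|\sgrt F|\le 1$ a.e. The standard fact that a continuous function on a smooth compact Riemannian manifold whose weak gradient lies in $L^\infty$ with norm $\le 1$ is $1$-Lipschitz with respect to the geodesic distance then gives $|f(x)-f(y)| = |F(\widetilde x) - F(\widetilde y)| \le \rho(\widetilde x,\widetilde y) = \distB(x,y)$, yielding $\distI(x,y)\le \distB(x,y)$. The evenness of $F$ ensures that no spurious normal-derivative jump across the equator spoils the argument.

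For $\distI \ge \distB$: fix $y\in B$ and take the test function $f_y(x) := \distB(x,y) = \rho(\widetilde x,\widetilde y)$. Its even lift $F_y$ is $1$-Lipschitz on $S^d$ with $|\sgrt F_y|=1$ a.e. I mollify $F_y$ on $S^d$ and symmetrize under $z_{d+1}\mapsto -z_{d+1}$ to obtain $C^\infty$ symmetric approximations $F_{y,n}\to F_y$ uniformly with $|\sgrt F_{y,n}|\le 1$. Near the equator a smooth even function on $S^d$ depends on $z_{d+1}$ only through $z_{d+1}^2 = 1-|z'|^2$, so the pullback $f_{y,n}(x) := F_{y,n}(\widetilde x)$ lies in $C^\infty(\overline B)\subset C^2(B)$. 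Dominated convergence with integrable dominant $4(1-|x|^2)^\mu$ then gives $f_{y,n}\to f_y$ in the graph norm of $\overline{\mathcal E}$, so $f_y\in D(\overline{\mathcal E})\cap C(B)$ with $\Gamma(f_y,f_y)\le 1$ a.e., and hence $\distI(x,y)\ge f_y(x)-f_y(y) = \distB(x,y)$.

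The main technical obstacle I expect is the careful bookkeeping between $D(\overline{\mathcal E})$, defined via $L^2(\W)$-approximation by $C^2(B)$ functions, and the weighted Sobolev space on $S^d$ with weight $z_{d+1}^{2\mu} = (1-|z'|^2)^\mu$, particularly across the equator $|x|=1$ where the weight degenerates for $\mu>0$ and blows up for $-1/2<\mu<0$. Once this correspondence and the identity $\Gamma(f,f) = |\sgrt F|^2\circ \Phi$ are secured in the non-smooth setting, both Lipschitz arguments above go through without further difficulty.
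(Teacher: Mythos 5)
Your proposal is correct and rests on the same transfer principle as the paper: lift to the hemisphere via $\widetilde x=(x,\sqrt{1-|x|^2})$ and use the identity $\Gamma(f,f)(x)=|\sgrt\widetilde f(\widetilde x)|^2$, which is exactly \eqref{transfer}. For $\distI\le\distB$ both arguments are the same Lipschitz-from-a.e.-gradient-bound idea; you invoke it as a standard Riemannian fact on $S^d$ after even extension, while the paper's Lemma~\ref{lem:transfer}(b) proves it directly by mollifying on annuli inside $B$, transferring to the hemisphere, and passing to the limit. The genuine divergence is in $\distI\ge\distB$. The paper constructs the explicit family $f_{\delta,\eps}(x)=\arccos\tfrac{\langle x,y\rangle+\sqrt{(1+\eps)^2-|x|^2}\sqrt{(1+\eps)^2-|y|^2}}{(1+\delta)^2}$, which is smooth on a neighbourhood of $B$ whenever $0<\eps<\delta$, checks $\Gamma(f_{\delta,\eps},f_{\delta,\eps})\le1$ by an elementary algebraic chain reducing it to monotonicity of $\distB$ under the dilation $x\mapsto x/(1+\eps)$, and then lets $\delta\to\eps^+$, $\eps\to0^+$. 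You instead take $f_y=\distB(\cdot,y)$ directly, mollify and symmetrize its even lift on $S^d$ (e.g.\ with an $O(d+1)$-invariant approximate identity) to produce smooth, even, $1$-Lipschitz approximants, appeal to Whitney's theorem on even smooth functions so that the pullbacks lie in $C^\infty(B)$, and establish $f_y\in D(\overline{\mathcal E})$ by dominated convergence in the graph norm. Both routes are sound; the paper's is more elementary and entirely explicit (no manifold mollification, no Whitney theorem), while yours is more conceptual and exhibits that the supremum defining $\distI$ is attained by the distance function itself once that function is shown to lie in the form domain. The place where both arguments require real care — which you flag as the main obstacle but do not fully settle — is identifying the graph-norm limit of the transferred gradients with the distributional spherical gradient across the equator, where the density $z_{d+1}^{2\mu}$ degenerates (vanishes for $\mu>0$, blows up for $-1/2<\mu<0$); the paper's Lemma~\ref{lem:transfer}(b) is precisely that verification, and your write-up would need to incorporate an equivalent step.
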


In the sequel, we will use the following special case of a simple lemma from \cite{DaiXu}.

\begin{lem}[{\cite[Lemma 11.3.6]{DaiXu}}]\label{lem:doub}
For $x \in B$ and $r > 0$,
\[
W_\mu \big( B(x,r)  \big) \simeq 
\begin{cases}
        r^d \big( \sqrt{1-|x|^2} + r \big)^{2\mu}, & 0 < r \le \pi, \\
        1, &  r > \pi.
    \end{cases}
\]
\end{lem}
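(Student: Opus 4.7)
The plan is to push the computation forward from $B$ to the upper hemisphere $S^d_+$ via the map $x \mapsto \widetilde{x} = (x, \sqrt{1-|x|^2})$. By the definition of $\distB$, this map carries $\distB$ to the geodesic distance $\dist$ on $S^d$, while the projection $\widetilde{x} \mapsto x$ has spherical Jacobian $d\sigma(\widetilde{x}) = (1-|x|^2)^{-1/2}\, dx$. Hence
\[
W_\mu(x)\,dx = (1-|x|^2)^{\mu - 1/2}\,dx = \widetilde{x}_{d+1}^{2\mu}\,d\sigma(\widetilde{x}),
\]
and $B(x,r)$ corresponds to $c^+(\widetilde{x},r) = c(\widetilde{x}, r) \cap S^d_+$. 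The lemma thus reduces to showing
\[
\int_{c^+(\widetilde{x}, r)} y_{d+1}^{2\mu}\,d\sigma(y) \simeq
\begin{cases} r^d (h+r)^{2\mu}, & 0 < r \le \pi,\\ 1, & r > \pi,\end{cases}
\]
where $h := \widetilde{x}_{d+1} = \sqrt{1-|x|^2}$.

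The case $r > \pi$ is immediate: since the diameter of $(S^d,\dist)$ is $\pi$, one has $c^+(\widetilde{x},r) = S^d_+$, and $\int_{S^d_+} y_{d+1}^{2\mu}\,d\sigma$ is a finite positive constant, the finiteness relying on $2\mu > -1$. For $r \le \pi$, I split into two subcases according to the relative sizes of $r$ and $h$. If $r \le h/2$, the $1$-Lipschitz property of $y \mapsto y_{d+1}$ on $(S^d,\dist)$ forces $|y_{d+1} - h| \le r \le h/2$ on the cap, so $y_{d+1} \simeq h$ there; combined with the standard asymptotic $|c(\widetilde{x},r)| \simeq r^d$, this gives $W_\mu(B(x,r)) \simeq h^{2\mu} r^d \simeq (h+r)^{2\mu} r^d$ (since $h + r \simeq h$).

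If instead $r > h/2$, so that $h + r \simeq r$, the target becomes $W_\mu(B(x,r)) \simeq r^{d+2\mu}$. I would parameterize $S^d$ by latitude-longitude coordinates $y = \sin\alpha\, e_{d+1} + \cos\alpha\, \xi$, $\alpha \in [-\pi/2,\pi/2]$, $\xi \in S^{d-1}$, so that $y_{d+1} = \sin\alpha$ and $d\sigma(y) = \cos^{d-1}\alpha\, d\alpha\, d\sigma(\xi)$. Writing $\widetilde{x}$ in these coordinates as $(\alpha_0, \xi_0)$ with $\sin\alpha_0 = h$, the spherical law of cosines expresses the cap condition as $\sin\alpha_0 \sin\alpha + \cos\alpha_0\cos\alpha\,\langle \xi_0, \xi\rangle > \cos r$. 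A first-order Taylor expansion (valid for $r$ below an absolute constant $r_0$, with the regime $r_0 < r \le \pi$ handled by a compactness argument since both sides of the target estimate are then $\simeq 1$) shows that at each latitude $\alpha$ the longitude slice is a spherical cap on $S^{d-1}$ of angular radius $\simeq \sqrt{(r^2 - (\alpha - \alpha_0)^2)_+}$, hence of $(d-1)$-dimensional measure $\simeq (r^2 - (\alpha - \alpha_0)^2)_+^{(d-1)/2}$. Fubini then gives
\[
W_\mu(B(x,r)) \simeq \int_{(\alpha_0 - r)_+}^{\alpha_0 + r} \alpha^{2\mu}\, (r^2 - (\alpha - \alpha_0)^2)^{(d-1)/2}\, d\alpha \simeq r^{d + 2\mu},
\]
where the upper bound follows by replacing $(r^2 - (\alpha - \alpha_0)^2)^{(d-1)/2}$ by $r^{d-1}$ and integrating $\alpha^{2\mu}$ over $[0, 2r]$ (using $2\mu > -1$), and the matching lower bound by restricting to a subinterval of length $\simeq r$ on which $\alpha \simeq r$ and $r^2 - (\alpha - \alpha_0)^2 \simeq r^2$. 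The main obstacle is this last computation when $-1/2 < \mu < 0$, where the integrable singularity of $\alpha^{2\mu}$ at the equator must be balanced carefully against the factor $(r^2 - (\alpha - \alpha_0)^2)^{(d-1)/2}$ in order to recover the sharp exponent $d + 2\mu$.
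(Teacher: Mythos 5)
The paper does not actually prove this lemma; it merely cites \cite[Lemma 11.3.6]{DaiXu}. You supply a self-contained proof, and your opening move---pushing forward to $S^d_+$ via $x \mapsto \widetilde x$ with $W_\mu(x)\,dx = \widetilde{x}_{d+1}^{2\mu}\,d\sigma(\widetilde x)$, so that the claim becomes the weighted cap estimate---is exactly the transfer the paper itself performs in Section~4 (cf.\ \eqref{iden66} and \eqref{cap}). Your two-regime split is the natural one and is correct. In the regime $r \le h/2$ the $1$-Lipschitz observation gives $y_{d+1} \simeq h$ on the cap, the cap stays inside $S^d_+$, and $\sigma(c(\widetilde x, r)) \simeq r^d$; nothing more is needed. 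In the regime $r > h/2$, once $r$ is below a fixed small $r_0$, the reduction to $\int \alpha^{2\mu}\,(r^2 - (\alpha - \alpha_0)^2)_+^{(d-1)/2}\,d\alpha$ is valid, and the Taylor step you wave at is in fact uniform, since $\cos(\alpha - \alpha_0) - \cos r = 2\sin\tfrac{r+\alpha-\alpha_0}{2}\sin\tfrac{r-\alpha+\alpha_0}{2}$ is comparable to $(r^2 - (\alpha - \alpha_0)^2)/2$ for small arguments. Your upper bound (replace the second factor by $r^{d-1}$, integrate $\alpha^{2\mu}$ over $[0, Cr]$) and your lower bound (restrict to a subinterval of length $\simeq r$ with $\alpha \simeq r$ and $|\alpha-\alpha_0|\le r/2$) both close, and the ``obstacle'' you flag for $-1/2 < \mu < 0$ does not actually materialize: the lower bound never samples near $\alpha = 0$, and the upper bound uses only $2\mu + 1 > 0$. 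The compactness argument for $r_0 \le r \le \pi$ is fine, since $\Wt(c^+(\widetilde x, r))$ is a continuous positive function of $(|x|, r)$ on a compact set and the right-hand side is $\simeq 1$ there. So your proof is correct; it trades the paper's citation for a direct argument built on the same hemisphere transfer the paper uses elsewhere, which makes the section more self-contained at the cost of a page of elementary spherical geometry.
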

This result shows that $(B,\W,\distB)$ possesses the doubling property. 
Because of Lemma~\ref{lem:dist}, $(B,\distI)$ is a complete metric space, and the metric induces the Euclidean topology. 
Furthermore, 
taking Lemma~\ref{lem:Dirform} into account and also the fact that the heat kernel 
$h_t^\mu (x,y)$ is a continuous function of the variables 
$(t,x,y) \in (0,\infty) \times B^2$, one sees that the assumptions stated in \cite[Sections 2.1 and 2.2]{HS} are satisfied. Thus by means of 
\cite[Theorem 2.7]{HS} (see also the last paragraph of \cite[p.\,1000]{CKP}) we conclude that Theorem~\ref{thm:heatest} is equivalent to the scale-invariant Poincar\'e inequality contained in the following result.
\begin{thm}\label{thm:Poincare}
For $f \in D( \overline{ \mathcal{E} } )$ we have
\[
\int_{B(z,r)} | f(x) - f_{B(z,r)} |^2 \, \W (x)
\lesssim
r^2 \int_{B(z,r)}
\Gamma (f,f) (x) \, \W (x),
\qquad z \in B, \quad r > 0,
\]
where 
$f_{B(z,r)} = W_\mu ( B(z,r) )^{-1} \int_{B(z,r)} f \, \W$.
\end{thm}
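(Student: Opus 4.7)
My plan is to transfer Theorem~\ref{thm:Poincare} to the hemisphere $S^d_+$ via the isometric embedding $\Phi(x) = (x, \sqrt{1-|x|^2})$, which sends $\distB$ to the geodesic distance on $S^d$ and the ball $B(z,r)$ to the half-cap $c^+(\widetilde z, r)$. Parametrising $S^d_+$ by $x \in B$ yields $d\sigma(\widetilde x) = (1-|x|^2)^{-1/2}\,dx$, so $\W$ pulls back to $\widetilde x_{d+1}^{2\mu}\,d\sigma$ on $S^d_+$. Using the orthogonal decomposition $|\gr f|^2 = (\partial_r f)^2 + |x|^{-2}|\sgr f|^2$ on $B$, together with the identification $T_{\widetilde x}S^d = \widetilde x^{\perp}$ in $\RR$, one finds that for $\widetilde f = f \circ \Phi^{-1}$
\[
|\sgrt \widetilde f(\widetilde x)|^2 = |\gr f(x)|^2 - |x|^2(\partial_r f(x))^2 = (1-|x|^2)(\partial_r f(x))^2 + |x|^{-2}|\sgr f(x)|^2 = \Gamma(f,f)(x).
\]
Thus Theorem~\ref{thm:Poincare} is equivalent to a weighted Poincar\'e inequality on $S^d_+$ for the caps $c^+(\widetilde z, r)$, with respect to $\sgrt$ and the weight $\widetilde x_{d+1}^{2\mu}$.

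\textbf{Case analysis.} Let $\delta = \sqrt{1-|z|^2}$, which is comparable to the distance from $\widetilde z$ to the equator $\{x_{d+1}=0\}$ when $\delta$ is small. I would distinguish three regimes. For $0 < r \le \delta/4$, the cap $c^+(\widetilde z, r) = c(\widetilde z, r)$ lies at distance at least $\delta/2$ from the equator and the weight is comparable to $\delta^{2\mu}$ throughout, so the estimate reduces to the classical unweighted Poincar\'e inequality on small caps of $S^d$, which are bi-Lipschitz to Euclidean balls. For $\delta/4 < r \le \pi/20$ --- the crucial case --- I would introduce a Euclidean-type chart near $\widetilde z$ with a coordinate $s \simeq x_{d+1}$ transverse to the equator. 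The weight becomes essentially $|s|^{2\mu}$, and the spherical metric is comparable to Euclidean. Slice the cap by hyperplanes $\{s=\text{const}\}$ and apply in the $s$-direction the one-dimensional weighted Poincar\'e inequality
\[
\int_{-\rho}^{\rho} |g(s) - g_{[-\rho,\rho]}|^2 |s|^{2\mu}\,ds \lesssim \rho^2 \int_{-\rho}^{\rho} |g'(s)|^2 |s|^{2\mu}\,ds,
\]
which is valid uniformly in $\rho>0$ for $\mu>-1/2$ (since $|s|^{2\mu}\,ds$ is doubling on $\mathbb{R}$ and the Sturm--Liouville operator $-|s|^{-2\mu}\frac{d}{ds}\big(|s|^{2\mu}\frac{d}{ds}\big)$ on a bounded interval has positive spectral gap), and combine with the tangential Euclidean Poincar\'e via Minkowski's inequality. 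Finally, for $r > \pi/20$, cover $c^+(\widetilde z, r)$ by a uniformly bounded collection of smaller caps to which the previous cases apply, and conclude by a Jerison-type chaining argument; since $r$ is then bounded below, the $r^2$ factor absorbs all constants.

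\textbf{Main obstacle.} The delicate step is the middle regime, where the cap crosses the equator and $\widetilde x_{d+1}^{2\mu}$ becomes genuinely singular. The main technical difficulty lies in constructing a chart in which the spherical geometry and the weight are simultaneously well adapted, and then combining the slicewise and tangential estimates without picking up extra factors depending on $r$ or $\delta$. Controlling the interaction of the spherical gradient with the singularity of $|s|^{2\mu}$ at $s=0$, and verifying that the 1D Poincar\'e constant is independent of the interval length, are the delicate technical points; I expect Section~\ref{sec:Poincare} to supply the bi-Lipschitz chart, the one-dimensional weighted Poincar\'e on symmetric intervals, and the Fubini combination argument needed to complete the proof with uniform constants.
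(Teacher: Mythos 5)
Your overall route matches the paper's: both arguments transfer the problem to the hemisphere $S^d_+$ (where $\Gamma(f,f)$ becomes $|\sgrt\widetilde f|^2$), reduce to small radii, and split into cases according to whether the cap is far from or close to the equator $\{x_{d+1}=0\}$, handling the first case by the unweighted spherical Poincar\'e inequality and the second by a slicewise/transverse decomposition. However, the key technical claim in your middle regime is false as stated, and it stems from a geometric misreading of the situation.

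You model the transverse direction by the weighted Poincar\'e inequality
\[
\int_{-\rho}^{\rho} |g(s) - g_{[-\rho,\rho]}|^2 |s|^{2\mu}\,ds
\le C \rho^2 \int_{-\rho}^{\rho} |g'(s)|^2 |s|^{2\mu}\,ds
\]
and assert it holds uniformly in $\rho$ for all $\mu>-1/2$. By scaling it suffices to consider $\rho=1$, and there the inequality \emph{fails} for $\mu>1/2$: take $g_\eps$ odd, equal to $\pm 1$ for $|s|\ge\eps$ and linear on $[-\eps,\eps]$. Then $g_{[-1,1]}=0$ by symmetry, the left-hand side stays bounded below, while
\[
\int_{-1}^{1}|g_\eps'(s)|^2|s|^{2\mu}\,ds \simeq \eps^{-2}\int_{-\eps}^{\eps}|s|^{2\mu}\,ds \simeq \eps^{2\mu-1}\longrightarrow 0
\]
as $\eps\to 0$ when $\mu>1/2$. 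The weight $|s|^{2\mu}$ is indeed doubling for all $\mu>-1/2$, but doubling alone does not give Poincar\'e; the pinch at $s=0$ essentially disconnects the two halves of the interval for $\mu\ge 1/2$, and the ``positive spectral gap'' you invoke is not there. Since the theorem must cover all $\mu>-1/2$, this is a genuine gap.

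The source of the error is geometric: on $S^d_+$ one always has $x_{d+1}\ge 0$, so a cap centred near the equator is \emph{truncated} by it, not bisected. The transverse model is the \emph{one-sided} interval $[0,\rho]$ with weight $s^{2\mu}$, and the Poincar\'e inequality on $[0,\rho]$ with this weight does hold uniformly for all $\mu>-1/2$ (the mass near $0$ is negligible, so no connectivity problem arises). The paper avoids invoking a named one-dimensional Poincar\'e lemma altogether: in its Case 2 the difference $f(x)-f(y)$ is split into a tangential piece (at fixed height $x_{d+1}$), handled by the standard Poincar\'e inequality on $S^{d-1}$ slice by slice, and a transverse piece, handled by integrating $|\sgrt f|$ along a meridian (Lemma~\ref{lem:geod}) followed by Cauchy--Schwarz and Fubini; the one-sidedness of the interval $[0,3\tau r]$ is thus built into the integration limits and causes no trouble. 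If you correct your claim to the one-sided interval and weight $s^{2\mu}$, and replace the vague ``combine via Minkowski'' step by such an explicit split-and-Fubini estimate, your outline becomes essentially the paper's proof.
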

The proof of Theorem~\ref{thm:Poincare} is rather technical and so postponed until Section~\ref{sec:Poincare}.

\vspace{0,5cm}
\section{Intrinsic distance - proof of Lemma~\ref{lem:dist} }
\label{sec:distance}

We denote by $\pi\colon S^d_+ \to B $ the projection 
$ (x_1, \dots, x_d, x_{d+1}) \mapsto (x_1, \dots, x_d)$. For any function
$f$ defined
in a subset $\Omega$ of  $B$, we set  $\widetilde{f} = f \circ \pi$, defined
in $\pi^{-1}(\Omega) \subset S^d_+$.

\begin{lem}\label{lem:transfer}
\begin{itemize}
\item[(a)]
Let $\Omega$ be a relatively open subset of $B$. If $f \in C^1(\Omega)$,
then  $\widetilde{f} \in C^1(\pi^{-1}(\Omega))$, and
\begin{align}\label{iden4}
\sgrt \widetilde{f} \left(x, \sqrt{1-|x|^2}\right)
= \begin{cases}
        \left( \gr f(x) - x |x| \partial_r f(x) , -   \sqrt{1-|x|^2} \: 
|x|\, \partial_r f(x)  \right), & x \in \Omega \setminus \{0\}, \\
        \left( \gr f(0), 0 \right) , &  x=0  \in \Omega,
    \end{cases}
\end{align}
and 
\begin{align}\label{transfer}
\left| \sgrt \widetilde{f} \left(x, \sqrt{1-|x|^2} \right) \right|^2
= (1-|x|^2) |\partial_r f(x)|^2 + \frac{1}{|x|^2} |\sgr f(x)|^2,  
\qquad x \in \Omega \setminus \{0\}.
\end{align}
\item[(b)]
Let $f \in L^2_{\loc}(\interior B \setminus \{0\})$ and assume that  
$\partial_r f$  and 
 $\sgr f$ exist in the $L^2$ distribution sense in $\interior B \setminus \{0\}$, 
see Definitions~\ref{def:r} and \ref{def:sgr}. 
Further, assume that 
 \begin{equation}
   \label{Y}
  (1-|x|^2)\, |\partial_r f(x)|^2 + \frac{1}{|x|^2}\, |\sgr f(x)|^2 \le 1, \qquad \textrm{a.a.}\; x \in B.
 \end{equation}
Then  $\widetilde{f}$ can, after modification on a null set, be extended to
a continuous function on   $S^d_+$
satisfying a Lipschitz condition 
\begin{equation*} 
  |\widetilde{f}(\xi) - \widetilde{f}(\z)| 
\le \dist(\xi,\z), 
\qquad \xi, \z \in S^d_+.
\end{equation*}
 Further, $f$ then 
 coincides
a.e. with a continuous function defined in $B$ and satisfying
\begin{equation*}
  |f(x) - f(y) | 
\le \distB (x,y), 
\qquad x,  y \in B.
\end{equation*}
\end{itemize}
\end{lem}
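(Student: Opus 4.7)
The plan for part (a) is direct computation. I would extend $\widetilde{f}$ to a neighborhood of $S^d_+$ in $\RR$ by $F(y_1,\dots,y_{d+1}) = f(y_1,\dots,y_d)$, so that $\gr F(\xi) = (\gr f(x),0)$ at $\xi = (x,\sqrt{1-|x|^2})$. Since the spherical gradient on $S^d$ is the tangential projection $\sgrt F(\xi) = \gr F(\xi) - \langle \xi, \gr F(\xi)\rangle\,\xi$, and since $\langle x,\gr f(x)\rangle = |x|\,\partial_r f(x)$, the formula \eqref{iden4} drops out at once; the case $x=0$ is handled separately because $\xi=e_{d+1}$ is then orthogonal to $(\gr f(0),0)$. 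For the norm identity \eqref{transfer}, I would combine the Pythagorean relation
\[
\left|\sgrt\widetilde{f}\left(x,\sqrt{1-|x|^2}\right)\right|^2
= |\gr F(\xi)|^2 - |\langle\xi,\gr F(\xi)\rangle|^2
= |\gr f(x)|^2 - |x|^2|\partial_r f(x)|^2
\]
with the companion identity $|\sgr f(x)|^2 = |x|^2\bigl(|\gr f(x)|^2 - |\partial_r f(x)|^2\bigr)$, which follows from expanding the square in \eqref{iden3} and using $\langle x,\gr f(x)\rangle = |x|\,\partial_r f(x)$ once more.

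For part (b), the map $\pi$ restricts to a smooth diffeomorphism from the good set $S^d_+ \setminus \bigl(\{e_{d+1}\}\cup(S^{d-1}\times\{0\})\bigr)$ onto $\interior B \setminus\{0\}$. Via its inverse I would transfer the $L^2$ weak derivatives of $f$ to a weak tangential gradient of $\widetilde{f}$ on the good set; the same algebra underlying \eqref{transfer} then turns the hypothesis \eqref{Y} into $|\sgrt\widetilde{f}|\le 1$ almost everywhere there. From this I would invoke the standard Sobolev-to-Lipschitz correspondence on a Riemannian manifold: mollify $\widetilde{f}$ locally through the exponential map to obtain smooth $\widetilde{f}_{\eps}$ with $|\sgrt\widetilde{f}_{\eps}|\le 1$ (the bound survives averaging by convexity), integrate along geodesics to see that each $\widetilde{f}_{\eps}$ is $1$-Lipschitz on compact pieces of the good set, and pass to the limit to obtain a continuous representative of $\widetilde{f}$ that is $1$-Lipschitz on the good set with respect to $\dist$.

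It remains to extend the Lipschitz estimate to all of $S^d_+$. The pole $e_{d+1}$ is a single point and is hence removable: the Lipschitz function on its punctured neighbourhood extends uniquely by continuity. For the equator, the key geometric observation is that whenever $\xi,\z \in S^d_+$, the shorter great-circle arc joining them lies entirely in $S^d_+$ (the intersection of a great circle with the closed upper hemisphere is an arc of length at most $\pi$), so the Lipschitz inequality is valid with the ambient distance $\dist$ for all pairs in the open upper hemisphere; density and uniform continuity then extend the bound to the closed hemisphere. Finally, since $\pi\colon S^d_+\to B$ is a homeomorphism that intertwines $\dist$ with $\distB$ by the very definition of $\distB$, pulling back yields the claimed continuous $\distB$-Lipschitz representative of $f$ on $B$.

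The main obstacle I anticipate is making the Sobolev-to-Lipschitz step rigorous despite the two singular pieces. On the smooth good set the mollification argument is routine, and the singular sets are then benign: the pole contributes nothing because it carries no measure and can be bypassed by short detours of arbitrarily small length, and the equator is approached by continuity in a way that preserves the Lipschitz constant thanks to the great-circle observation above. The other potentially delicate point is that \eqref{transfer} was derived for $C^1$ functions, but since it is an algebraic identity in $\partial_r f$ and $\sgr f$ the $L^2$ version follows from the diffeomorphism change of variables on the good set without further work.
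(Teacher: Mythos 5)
Part (a) of your proposal is essentially the paper's own argument: both extend $f$ to a cylinder function $F$ in $\RR$, use the tangential-projection/Euler formula for $\sgrt$ at a unit vector (the $(d+1)$-dimensional version of \eqref{iden3}), and read off \eqref{iden4}; your derivation of \eqref{transfer} via Pythagoras plus the companion identity $|\sgr f|^2=|x|^2(|\gr f|^2-|\partial_r f|^2)$ is a slightly different algebraic arrangement of the same facts, and is correct.

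For part (b) your overall strategy (mollify on compacta away from the pole and equator, derive a $1$-Lipschitz bound for the mollifications, pass to the limit, then handle the two singular sets separately) is parallel to the paper's. The difference is where the mollification lives and, crucially, how the gradient bound is preserved. You claim that mollifying $\widetilde f$ ``through the exponential map'' yields smooth $\widetilde f_\eps$ with $|\sgrt\widetilde f_\eps|\le 1$ exactly, ``by convexity.'' This is the one genuinely underjustified step. Differentiating $\xi\mapsto\int\widetilde f(\exp_\xi v)\,\eta_\eps(v)\,dv$ in $\xi$ moves both the base point and the exponential chart; the resulting Jacobi-field/curvature terms mean the covariant gradient of the average is \emph{not} simply the average of the gradients, so the bound is not preserved by ``convexity'' alone — it is preserved only up to an error of order $\eps^2$ or so. Two fixes are available: (i) average over the isometry group $SO(d+1)$ (an approximate identity in $g$), where the bound \emph{is} exactly preserved since each $g$ is an isometry, keeping $g$ close to the identity so that $g\xi$ stays in the domain of $\widetilde f$; or (ii) accept a controllable error, as the paper does. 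The paper chooses a third, more computational route: it mollifies $f$ by Euclidean convolution in the annulus $\Omega_\delta\subset\R$, and since $\sqrt{1-|x|^2}\,\partial_r$ and $|x|^{-1}\sgr$ do \emph{not} commute with Euclidean translations, it explicitly computes error terms $E_1,E_2$ of size $C_\delta\eps$, obtaining $|\sgrt\widetilde{f_\eps}|\le 1+C_\delta\eps$ and then a Lipschitz bound with error $2\pi\delta$ after rerouting geodesics around the polar cap; both errors vanish as $\eps,\delta\to0$. Your handling of the equator via the great-circle observation (the minimizing geodesic between two points of $S^d_+$ stays in $S^d_+$) and of the pole via removability/short detours matches the spirit of the paper's rerouting argument and is fine. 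So: correct structure, correct geometric endgame, but the smoothing step needs to be made honest — either isometry averaging or an explicit error estimate à la the paper.
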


\begin{proof}
To  prove  (a), we temporarily let $F(\widetilde x) = f(x)$, where
$\widetilde x = (x, x_{d+1}) \in \Omega \times \mathbb{R}$
and $x = (x_1, \dots, x_d)$. By
  $\widetilde \nabla$ and $\widetilde \partial_r$
we denote the gradient and the radial derivative in $\RR$, respectively. 
Then we apply the 
$(d+1)$-dimensional version of \eqref{iden3} to $F$ at a point 
$\widetilde x \in \pi^{-1}(\Omega)$, getting
\[
\sgrt \widetilde{f} \big(\widetilde x\big) 
=\widetilde \nabla F\big(\widetilde x\big)- 
\widetilde x \,\widetilde \partial_r F\big(\widetilde x\big)
= 
\begin{cases}
        \big(\gr f(x) - x |x| \partial_r f(x), - x_{d+1} |x| \partial_r f(x)\big), & x \in \Omega \setminus \{0\}, \\
        \left( \gr f(0), 0 \right) , &  x=0  \in \Omega.
    \end{cases}
\]
Since $x_{d+1} = \sqrt{{1-|x|^2}}$, this is 
\eqref{iden4}. Then \eqref{transfer} follows easily from \eqref{iden3} and the orthogonality of $\sgr f$ and $x$.

Aiming at (b), for a fixed $0 < \delta < 1/4$ we let 
$\Omega_{\delta} = \{x \in B: \delta < |x| < 1-\delta \}$. We take an approximate identity $(\eta_\eps)_{\eps > 0}$ in $\R$
such that $0 \le \eta_\eps \in C^\infty$ and 
$\mathrm{supp}\:\eta_\eps \subset \{x: |x| < \eps \}$ and 
$\int \eta_\eps \,dx = 1 $ for each $\eps > 0$. 
For $\eps < \delta/2$ define 
$f_\eps = f\ast \eta_\eps $ in $\Omega_{\delta}$. 
Then   $f_\eps \to f$ in $L^2(\Omega_{\delta})$ as $\eps \to 0$ and consequently 
$\widetilde{f_\eps} \to \widetilde{f}$ in $L^2(\pi^{-1}(\Omega_{\delta}))$ as $\eps \to 0$. Further, $f_\eps \in C^1(\Omega_{\delta})$ and applying \eqref{transfer} to $f_\eps$, we obtain
\begin{align}\label{iden21}
\left| \sgrt \widetilde{f_\eps} \left(x, \sqrt{1-|x|^2} \right) \right|
=
\left| \left( \sqrt{1-|x|^2} \, \partial_r f_\eps(x), \frac{1}{|x|} \sgr f_\eps(x) 
\right) \right|,
\qquad x \in \Omega_{\delta}, \quad \eps < \delta/2.
\end{align}
Now we show that for every $0 < \delta < 1/4$ there exists a constant $C_\delta > 0$ such that 
\begin{align}\label{est20}
\big| \sgrt \widetilde{f_\eps} ( \xi ) \big|
\le 1 + C_\delta \eps, 
\qquad \xi \in \pi^{-1}(\Omega_{\delta}), \quad \eps < \delta/2.
\end{align}
First note that the assumption that $\partial_r f$ and $\sgr f$ exist in the $L^2$ distribution sense forces that $\gr f$ exists in the $L^2$ distribution sense and the following hold for a.a. $x \in B$
\begin{align}\label{iden22}
|x| \gr f(x) = \sgr f(x) + x \partial_r f(x), 
\qquad 
\partial_r f(x) = \sum_{j=1}^d \frac{x_j}{|x|} \, \partial_j f(x).
\end{align}
This implies
\[
\partial_j f_\eps (x) = \int_B \partial_j f(x-y) \eta_\eps (y) \, dy, 
\qquad x \in \Omega_{\delta}, \quad \eps < \delta/2, \quad j=1, \ldots,d.
\]
Further, using \eqref{iden22} we get 
\begin{align*}
\sqrt{1-|x|^2} \, \partial_r f_\eps (x)
& =
\int_B \sqrt{1-|x-y|^2} \, \partial_r f(x-y) \eta_\eps (y) \, dy + E_1(x),
\qquad x \in \Omega_{\delta}, \quad \eps < \delta/2, \\
\frac{1}{|x|} \sgr f_\eps(x) 
& =
\int_B \frac{1}{|x-y|} \sgr f(x-y)  \eta_\eps (y) \, dy + E_2(x),
\qquad x \in \Omega_{\delta}, \quad \eps < \delta/2,
\end{align*}
where
\begin{align*}
E_1(x) & = \int_B \sum_{j=1}^d 
\bigg( \frac{\sqrt{1-|x|^2}x_j}{|x|} - \frac{\sqrt{1-|x-y|^2}(x_j-y_j)}{|x-y|} \bigg)
\partial_j f(x-y) \eta_\eps (y) \, dy, \\
E_2(x) &= - \int_B \sum_{j=1}^d 
\bigg( \frac{x x_j}{|x|^2} - \frac{(x-y) (x_j-y_j) }{|x-y|^2} \bigg)
\partial_j f(x-y) \eta_\eps (y) \, dy.
\end{align*}
Observe that there exists a constant $C_\delta > 0$ such that
\[
|(E_1(x), E_2(x))| \le C_\delta \eps, 
\qquad x \in \Omega_{\delta}, \quad \eps < \delta/2.
\]
Indeed, by the assumption \eqref{Y} we get 
$\esssup_{x \in \Omega_{\delta/2}} |\gr f(x)| < \infty$. Further, since the functions $H_j (x) = \frac{\sqrt{1-|x|^2}x_j}{|x|}$ and 
$G_{j,k} (x) = \frac{x_k x_j}{|x|^2}$, $j,k = 1, \ldots, d$, have bounded gradients in $\Omega_{\delta/2}$ (observe that for $x \in \Omega_{\delta}$, $|y| < \eps$ and $t \in [0,1]$ we have $x-ty \in \Omega_{\delta/2}$), the mean value theorem produces the asserted estimate.

Now using \eqref{iden21} and the above estimate for the error term, we see that the left-hand side of \eqref{est20} is bounded by 
\begin{align*}
& \left| \left( \int_B \sqrt{1-|x-y|^2} \, \partial_r f(x-y) \eta_\eps (y) \, dy , \int_B \frac{1}{|x-y|} \sgr f(x-y)  \eta_\eps (y) \, dy
\right) \right|
+ |(E_1(x), E_2(x))| \\
& \qquad \le 
\int_B \left| 
\left( \sqrt{1-|x-y|^2} \, \partial_r f(x-y) , \frac{1}{|x-y|} \sgr f(x-y) \right) \right| \eta_\eps (y) \, dy
+ C_\delta \eps.
\end{align*}
By \eqref{Y} and the fact that $\int \eta_\eps \,dx = 1$, the estimate \eqref{est20} follows in a straightforward way.

Next we show that 
\begin{align}\label{est21}
|\widetilde{f_\eps}(\xi) - \widetilde{f_\eps}(\z)| 
\le 
(1 + C_\delta \eps ) ( \dist(\xi,\z) + 2\pi \delta ), 
\qquad \xi, \z \in \pi^{-1}(\Omega_{2\delta}), 
\quad \eps < \delta/2.
\end{align}
We join $\xi$ and $\z$ by a geodesic in $S^d$. If this geodesic is contained in $\pi^{-1}(\Omega_{2\delta})$, then \eqref{est20} implies that 
$|\widetilde{f_\eps}(\xi) - \widetilde{f_\eps}(\z)| 
\le 
(1 + C_\delta \eps ) \dist(\xi,\z)$.
In the contrary case, the geodesic has an arc in the closed cap $\pi^{-1} (\{x \in B : |x| \le 2 \delta\})$. Then we replace this arc by an arc in the boundary of the cap $\pi^{-1} (\{x \in B : |x| = 2 \delta\})$ of length at most $2\pi \delta$. In this case we get \eqref{est21} as desired.

Now observe that since $\widetilde{f_\eps} \to \widetilde{f}$ in $L^2(\pi^{-1}(\Omega_{2\delta}))$ as $\eps \to 0$, there exists a sequence $\eps_i \to 0$ for which one has a.e. convergence in $\pi^{-1}(\Omega_{2\delta})$. 
Then \eqref{est21} implies that
\begin{equation*}
  |\widetilde{f}(\xi) - \widetilde{f}(\z)| 
\le \dist(\xi,\z) + 2\pi \delta,
\qquad \textrm{a.a.}\; \xi, \, \z \in \pi^{-1}(\Omega_{2\delta}).
\end{equation*}  
Letting now $\delta \to 0$ through a sequence, we conclude that 
\begin{equation} \label{est22}
  |\widetilde{f}(\xi) - \widetilde{f}(\z)| 
\le \dist(\xi ,\z),
\qquad \textrm{a.a.}\; \xi, \, \z \in S^d_+.
\end{equation}
This easily implies that $\widetilde{f}$ can be modified on a null set to become a continuous function verifying \eqref{est22} for all points of $S^d_+$. To obtain the last assertion of (b), we modify $f$ so that $\widetilde{f} = f \circ \pi$.
\end{proof}

Now we are ready to prove Lemma~\ref{lem:dist}.

\begin{proof}[Proof of Lemma~\ref{lem:dist}]
Since the inequality $\distI (x,y) \le \distB (x,y)$ is a straightforward consequence of Lemma \ref{lem:transfer} (b), we focus on the opposite inequality. Let $y \in B$ be fixed and let for $0 < \eps \le \delta$ 
\[
f_{\delta,\eps} (x) = \arccos \frac{\langle x , y \rangle + \sqrt{ (1+\eps)^2 - |x|^2} \sqrt{(1+\eps)^2 - |y|^2}}{(1 + \delta)^2},
\qquad x \in \R, \quad |x| < 1 + \eps.
\]
The numerator here is the scalar product of two vectors in $\mathbb{R}^{d+1}$ of length $1 + \eps$, so it does not exceed $(1+\eps)^2$. Thus $f_{\delta,\eps} \in C^\infty (\{x \in \R : |x| < 1 + \eps\})$, provided that $0 < \eps < \delta$. Assume for a moment that 
\begin{align}\label{est1}
\Gamma (f_{\delta,\eps} , f_{\delta,\eps} ) (x) \le 1,
\qquad x \in B \setminus \{ 0 \}, \quad 0 < \eps < \delta.
\end{align}
Then we have for each $0 < \eps < \delta$, by the definition of $\distI (x,y)$
\begin{align*}
\distI (x,y) 
 \ge 
f_{\delta,\eps} (x) - f_{\delta,\eps} (y).
\end{align*}
Now letting $\delta \to \eps^+$ and then $\eps \to 0^+$ we get 
$\distI (x,y) \ge \distB (x,y)$ for $x \in B$.
Thus the proof of Lemma \ref{lem:dist} will be finished once we verify \eqref{est1}.

Because $f_{\delta,\eps} \in C^1 (B)$ for $0 < \eps < \delta$, we see from Lemma \ref{lem:transfer} (a) that $\widetilde{ f_{\delta,\eps} } \in C^1 (S^d_+)$. To prove \eqref{est1} it is enough to show that
\[
\big| \sgrt \widetilde{f_{\delta,\eps}} (\xi) \big| \le 1,
\qquad \xi \in S^d_+, \quad 0 < \eps < \delta.
\]
This is equivalent to 
\[
\big| \widetilde{f_{\delta,\eps}} (\xi) - 
\widetilde{f_{\delta,\eps}} (\z) \big|
\le 
\dist (\xi, \z), 
\qquad \xi, \z \in S^d_+, \quad 0 < \eps < \delta.
\] 
Now using the simple inequality 
\[
|\arccos (\lambda a) - \arccos (\lambda b)|
\le 
|\arccos (a) - \arccos (b)|,
\qquad |\lambda|, |a|,|b| \le 1;
\]
which can be justified by writing the differences as integrals of the derivatives, we deduce that
\[
\big| \widetilde{f_{\delta,\eps}} (\xi) - 
\widetilde{f_{\delta,\eps}} (\z) \big|
\le 
\big| \widetilde{f_{\eps,\eps}} (\xi) - 
\widetilde{f_{\eps,\eps}} (\z) \big|.
\] 
Thus it suffices to show that
\[
\big| f_{\eps,\eps} (x) - f_{\eps,\eps} (z) \big|
\le 
\distB (x,z), \qquad x,z \in B, \quad \eps > 0.
\]
By definition $f_{\eps,\eps} (x) = \distB(x/(1+\eps), y/(1+\eps))$, and using the triangle inequality for $\distB$ we see that it is enough to prove that
\[
\distB \Big(\frac{x}{1+\eps}, \frac{z}{1+\eps} \Big) \le \distB (x,z), 
\qquad x,z \in B, \quad \eps > 0.
\]
This, in turn, is equivalent to showing that
\begin{align*}
\frac{|x-z|^2}{(1+\eps)^2} +
\left|
\sqrt{1- \left( \frac{|x|}{1+\eps} \right)^2 } - 
\sqrt{1- \left( \frac{|z|}{1+\eps} \right)^2 }
\right|^2
\le 
|x-z|^2 + 
\left| \sqrt{1- |x|^2} - \sqrt{1- |z|^2}
\right|^2,
\end{align*}
for $x,z \in B$ and $\eps > 0$. Finally, this inequality is elementary, since the first term in the left-hand side is controlled by the first term in the right-hand side and similarly for the second terms;
to see the latter it is convenient to use the identity $\sqrt{a} - \sqrt{b} = (a-b)/(\sqrt{a} + \sqrt{b})$.

The proof of Lemma~\ref{lem:dist} is finished.
\end{proof}

\vspace{0,5cm}
\section{Poincar\'e inequality - proof of Theorem~\ref{thm:Poincare}} \label{sec:Poincare}

To begin with, we observe that without any loss of generality we may assume that $0 < r \le \pi$.
Further, because of \cite[Remarks 1 and 2 on p.\,1450]{HS} and the doubling property of $\W$ (see Lemma~\ref{lem:doub}) we may assume that $0 < r \le 1/6$ and replace the left-hand side in Theorem~\ref{thm:Poincare} by 
$\int_{B(z,\tau r)} | f(x) - f_{B(z,\tau r)} |^2 \, \W (x)$ for some fixed $\tau \in (0,1)$.
Furthermore, since $C^2(B)$ is dense in 
$D( \overline{ \mathcal{E} } )$ in the norm generated by 
$\overline{ \mathcal{E} }$ (see \cite[(1.1.1)]{FOT} for the definition of this norm), using standard density arguments we reduce our problem to showing Theorem~\ref{thm:Poincare} for $f \in C^1(B)$.
Finally, we reduce Theorem~\ref{thm:Poincare} to a similar estimate in the context of the hemisphere $S_+^d$, in the following way.

We define the measure $d \Wt$ on $S^d_+$ by the density
\[
d\Wt (x) = \Wt (x) \, d\sigma (x), \qquad
\Wt (x) = x_{d+1}^{2\mu}, \qquad x \in S^d_+.
\]
As easily verified (see for instance \cite[A.5.4]{DaiXu}), we have
\begin{align}\label{iden66}
\int_{S^d_+} g(y) \, \dWt(y)
=
\int_B  g(x,\sqrt{1-|x|^2}) \, \W(x), \qquad g \in L^1(\dWt).
\end{align}
This together with Lemma~\ref{lem:doub} produces
\begin{align}\label{cap}
\Wt \big(  c^+(x,r) \big) \simeq r^d (x_{d+1} + r)^{2\mu},
\qquad 0<r\le \pi, \quad x \in S^d_+.
\end{align}
For $f \in C^1 (B)$ let 
$\widetilde{f}$ be defined as in the beginning of Section \ref{sec:distance}.
In view of Lemma~\ref{lem:transfer} (a) we have $\widetilde{f} \in C^1(S^d_+)$ and 
$\Gamma(f,f)(x) = \big| \sgrt \widetilde{f} \big(x, \sqrt{1-|x|^2} \big) \big|^2$ for $x \in B \setminus \{ 0 \}$.
Applying \eqref{iden66} with $g = \chi_{ c^+(\widetilde{z},r) } | \sgrt \widetilde{f} |^2$ for some 
$\widetilde{z} = \big(z , \sqrt{1 - |z|^2} \big) \in S^d_+$, we see that
\begin{align*}
\int_{B(z,r)} \Gamma (f,f) (x) \, \W (x)
=
\int_{c^+(\widetilde{z},r)}
|\sgrt \widetilde{f} (y) |^2  \, \dWt (y).
\end{align*}
Proceeding in a similar way, we get
\[
f_{B(z,r)} = 
\Wt \big(  c^+(\widetilde{z},r) \big)^{-1} 
\int_{c^+(\widetilde{z},r)} \widetilde{f} \, \dWt
=: \widetilde{f}_{c^+(\widetilde{z},r)}, \qquad r>0, \quad z \in B.
\]
Consequently, using once again \eqref{iden66} we obtain
\begin{align*}
\int_{B(z,r)} | f(x) - f_{B(z,r)} |^2 \, \W (x)
& =
\int_{c^+(\widetilde{z},r)} 
\big| \widetilde{f}(y) - \widetilde{f}_{c^+(\widetilde{z},r)} \big|^2  
\, \dWt (y).
\end{align*}
Thus Theorem~\ref{thm:Poincare} is a consequence of the following result.

\begin{lem}\label{lem:Poin}
There exists $\tau \in (0,1)$ such that
\begin{align}\label{estPoin}
\int_{c^+(z,\tau r)} | f(x) - f_{c^+(z,\tau r)} |^2  \, \dWt (x) 
\lesssim
r^2 \int_{c^+(z,r)}
|\sgrt f (x) |^2  \, \dWt (x), 
\end{align}
for  $z \in S_+^d$, $0 < r \le 1/6$ and $f \in C^1(S^d_+)$.
Moreover, one can take $\tau = \frac{1}{3\pi}$.
\end{lem}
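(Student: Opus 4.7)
My plan is to reduce the Poincar\'e inequality on $c^+(z,r)$ to Euclidean Poincar\'e inequalities through local coordinates, splitting into the case where the cap lies well inside the open upper hemisphere and the case where it approaches or meets the equator $\{x_{d+1}=0\}$. By the rotation invariance of $\dWt$ about the $x_{d+1}$-axis, I may assume $z = (z_1, 0, \ldots, 0, z_{d+1})$ with $z_1, z_{d+1} \ge 0$, and distinguish the cases \textbf{(A)} $z_{d+1} \ge 2r$ and \textbf{(B)} $z_{d+1} < 2r$.

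In Case (A), any $x \in c^+(z,r)$ satisfies $|x_{d+1} - z_{d+1}| \le \dist(x,z) \le r \le z_{d+1}/2$, so $x_{d+1} \simeq z_{d+1}$ on the whole cap and therefore $\dWt \simeq z_{d+1}^{2\mu}\, d\sigma$ with implied constants depending only on $\mu$. The problem then reduces to the unweighted Poincar\'e inequality on the spherical cap $c^+(z,r)$, which, via the exponential chart at $z$ (a bi-Lipschitz identification with a Euclidean ball, uniform in $r \le 1/6$), reduces to the classical Poincar\'e inequality on a Euclidean ball.

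In Case (B), I use the parametrization $x = (\cos\theta\cdot\omega,\, \sin\theta)$, $\omega \in S^{d-1}$, $\theta \in [0,\pi/2]$. In these coordinates $\dWt = \sin^{2\mu}\theta\, \cos^{d-1}\theta\, d\theta\, d\sigma(\omega)$ and $|\sgrt f|^2 = |\partial_\theta f|^2 + \cos^{-2}\theta\, |\sgr f|^2$, where $\sgr f$ is the spherical gradient acting on $S^{d-1}$ in the $\omega$-variable. Writing $z = (\cos\theta_z\cdot\omega_z,\, \sin\theta_z)$, one has $\theta_z \le \arcsin(2r) < \pi r$, so both $\theta$ and $\theta_z$ are small on $c^+(z,r)$, which is therefore sandwiched between two ``rectangles'' of the form $R = \{(\omega,\theta) : \dist_{S^{d-1}}(\omega,\omega_z) < Cr,\ 0 \le \theta < Cr\}$ on which $\cos\theta \simeq 1$ and $\sin\theta \simeq \theta$; the key geometric fact is that $\dist$ is bi-Lipschitz equivalent on this scale to the product quasi-distance $\max\bigl(|\theta - \theta'|,\, \dist_{S^{d-1}}(\omega,\omega')\bigr)$. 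The Poincar\'e inequality on $R$ with product weight $\theta^{2\mu}\, d\theta\, d\sigma(\omega)$ then follows by tensorizing the unweighted Poincar\'e inequality on geodesic balls of $S^{d-1}$ with the one-dimensional weighted inequality
\[
\int_0^{R_0} |g(\theta) - \bar g|^2\, \theta^{2\mu}\, d\theta
\lesssim R_0^2 \int_0^{R_0} |g'(\theta)|^2\, \theta^{2\mu}\, d\theta, \qquad \mu > -1/2,
\]
which is valid by a standard Hardy-type argument.

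The main obstacle lies in Case (B): controlling the bi-Lipschitz distortions between $\dist$ and the product quasi-distance on $R$, and choosing $\tau$ small enough that $c^+(z,\tau r)$ sits inside the inner rectangle on which the tensorized Poincar\'e is applied (with enough room that the numerator and denominator of the average coming from $c^+(z,\tau r)$ are controlled by those of $R$). The explicit value $\tau = 1/(3\pi)$ is dictated by these geometric constants together with the factor $\pi/2$ coming from $\dist(\xi,\zeta) \le (\pi/2)|\xi - \zeta|$ on the sphere; no single step is deep, but the geometric bookkeeping is delicate.
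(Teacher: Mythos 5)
Your proposal is correct and follows essentially the same route as the paper: the same case split on $z_{d+1}$ versus (a multiple of) $r$, the same reduction in the easy case to the unweighted spherical Poincar\'e inequality, and in the hard case the same sandwiching of the cap between ``rectangles'' in the angle--height coordinates followed by a tensorized Poincar\'e inequality (angular part via the $S^{d-1}$ Poincar\'e inequality, vertical part via a one-dimensional Poincar\'e inequality with weight $\theta^{2\mu}$). The only real difference is presentational: you invoke the one-dimensional weighted Poincar\'e inequality as a known fact, whereas the paper proves it on the spot by integrating $|\sgrt f|$ along a meridian (its Lemma~\ref{lem:geod}) and then applying Cauchy--Schwarz and a direct computation with the weight.
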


Notice that the left-hand side in \eqref{estPoin} is comparable with
\begin{equation}\label{Poinequiv}
\frac{1}{\Wt \big(  c^+(z,\tau r) \big) } 
\int_{c^+(z,\tau r)} \int_{c^+(z,\tau r)}
|f(x) - f(y)|^2 \, \dWt (y) \, \dWt (x).
\end{equation}
Indeed, inserting the expression for the mean value and then using the Cauchy-Schwarz inequality we see that the left-hand side of \eqref{estPoin} is controlled by \eqref{Poinequiv}. The opposite estimate follows if we write $| f(x) - f(y) | \le  | f(x)- f_{c^+(z,\tau r)}|   +  | f(y)  - f_{c^+(z,\tau r)}|$.
We will use this fact in the proof of Lemma~\ref{lem:Poin} without further mentioning.

In the proof of Lemma~\ref{lem:Poin} we will use the standard Poincar\'e inequality in the unit sphere $S^d$, see \cite[Theorem~5.6.5]{Sa},
\begin{equation}\label{Poinstand}
(r\wedge 1)^{-d} \int_{c(z, r)} \int_{c(z, r)}
|f(x) - f(y)|^2 \, d\sigma (y) \, d\sigma (x)
\lesssim
r^2 \int_{c(z,r)}
|\sgrt f (x) |^2  \, d\sigma (x), 
\end{equation}
holding for $z \in S^d$, $r > 0$ and $f \in C^1(S^d)$.

Further, to prove Lemma~\ref{lem:Poin} we shall need several facts and technical lemmas which are gathered below.
We begin with the following simple relations, see \cite[(A.1.1)]{DaiXu},
\begin{align}\label{distrel}
\frac{2}{\pi} \, \dist(x,y) \le |x - y| \le \dist(x,y), \qquad x,y \in S^d,
\end{align}
which will be frequently used in this section.

\begin{lem}[{\cite[Lemma A.5.4]{DaiXu}}]\label{lem:walk}
Assume that $f\ge 0$ or $f \in L^1(S^d)$.
Then
\[
\int_{S^d} f(y) \, d\sigma (y)
=
\int_{-1}^1 (1-y_{d+1}^2)^{d/2 - 1} \int_{S^{d-1}} 
f \left(\sqrt{1-y_{d+1}^2} \, y', y_{d+1} \right) 
\, d\sigma (y') \, dy_{d+1}.
\]
\end{lem}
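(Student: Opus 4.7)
The plan is to establish the identity by slicing $S^d$ with the parallel hyperplanes $\{y_{d+1}=t\}$, $t\in(-1,1)$, each of which meets $S^d$ in a $(d-1)$-sphere of radius $\sqrt{1-t^2}$. Since the statement is linear in $f$, I would first reduce to the case $f\ge 0$ by splitting into positive and negative parts; in the $L^1$ case, absolute integrability guarantees that Tonelli's theorem applies afterwards. The two poles $\pm e_{d+1}$ form a null set on $S^d$ and may be discarded.

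Next, I would parameterize what remains by the smooth diffeomorphism
\[
\Phi\colon(-1,1)\times S^{d-1}\longrightarrow S^d\setminus\{\pm e_{d+1}\},\qquad (t,y')\longmapsto\bigl(\sqrt{1-t^2}\,y',\,t\bigr),
\]
and compute the pullback of the surface measure. The tangent vector in the $t$-direction is
\[
\partial_t\Phi=\Bigl(-\tfrac{t}{\sqrt{1-t^2}}\,y',\,1\Bigr),\qquad |\partial_t\Phi|^2=\tfrac{t^2}{1-t^2}+1=\tfrac{1}{1-t^2},
\]
while any tangent vector $v'$ to $S^{d-1}$ at $y'$ pushes forward to $(\sqrt{1-t^2}\,v',0)$, of norm $\sqrt{1-t^2}\,|v'|$. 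The key observation is that these two tangent subspaces of $\mathbb{R}^{d+1}$ are orthogonal, because $v'\cdot y'=0$ in $\mathbb{R}^d$ (as $v'\in T_{y'}S^{d-1}$). Hence the Gram matrix of $d\Phi$ is block diagonal and its determinant factors, giving
\[
\Phi^{*}\,d\sigma_{S^d}=(1-t^2)^{-1/2}\cdot(1-t^2)^{(d-1)/2}\,dt\,d\sigma(y')=(1-t^2)^{d/2-1}\,dt\,d\sigma(y').
\]

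Finally, applying this change of variables together with Tonelli's theorem to $f\circ\Phi$ yields the asserted formula. The only nontrivial point is the orthogonality of the two tangent subspaces entering the Gram computation, and this is immediate from $v'\perp y'$; every remaining step is a direct calculation, so no serious obstacle is expected. An equivalent route would invoke the co-area formula for the height function $y\mapsto y_{d+1}$ on $S^d$, whose intrinsic gradient has norm $\sqrt{1-y_{d+1}^2}$; the reciprocal of this norm, multiplied by the surface area element of the cross-sectional sphere of radius $\sqrt{1-t^2}$, produces the same density $(1-t^2)^{d/2-1}$.
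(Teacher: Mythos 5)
Your argument is correct: the parameterization $(t,y')\mapsto(\sqrt{1-t^2}\,y',t)$, the orthogonality of $\partial_t\Phi$ to the pushed-forward tangent vectors of $S^{d-1}$ (which follows from $\langle y',v'\rangle=0$), and the resulting block-diagonal Gram determinant give exactly the density $(1-t^2)^{-1/2}(1-t^2)^{(d-1)/2}=(1-t^2)^{d/2-1}$, and Tonelli/dominated splitting handles the two hypotheses on $f$. Note that the paper itself offers no proof here --- the lemma is quoted verbatim from Dai--Xu (Lemma A.5.4) --- and your computation is precisely the standard argument behind that reference, so there is nothing to reconcile; the only peripheral remark is that the paper later also uses the formula with $d$ replaced by $1$ (where $S^0=\{-1,1\}$ carries counting measure), a degenerate case your Gram-matrix setup covers only after a trivial separate observation.
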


\begin{lem}\label{lem:geod}
Assume that the curve $\gamma \colon [a,b] \to S^d_+ \subset \RR$ belongs to $C^1([a,b])$, where $[a,b]$ is a bounded interval. Then for any $f \in C^1(S^d_+)$ we have
\[
|f(\gamma (b)) -  f(\gamma (a))| \le 
\int_a^b \big| \sgrt f \big( \gamma (t) \big) \big| |\gamma' (t)| \, dt.
\]
\end{lem}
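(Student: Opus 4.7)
The plan is to apply the fundamental theorem of calculus to the composition $t \mapsto f(\gamma(t))$, after computing its derivative by the chain rule. A key preliminary observation is that $|\gamma(t)|^2 \equiv 1$, so differentiation yields $\gamma(t) \cdot \gamma'(t) = 0$; in particular $\gamma'(t)$ is tangent to $S^d$ at $\gamma(t)$.

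Next, I would extend $f$ to a $C^1$ function $F$ on an open subset $U$ of $\RR \setminus \{0\}$ containing the image of $\gamma$, by setting $F(x) = f(x/|x|)$. If $\gamma$ meets the equator $S^{d-1} = \partial S^d_+$, one first extends $f$ locally across this equator inside $S^d$, using the $C^1$ regularity of $f$ up to the boundary of $S^d_+$; this is a routine Whitney-type extension. The function $F$ is then radially constant on its domain, so the $(d+1)$-dimensional version of \eqref{iden3} applied to $F$ (as already used in the proof of Lemma~\ref{lem:transfer}(a)) gives $\sgrt f(\xi) = \gr F(\xi)$ for every $\xi \in S^d_+ \cap U$, since the radial component vanishes.

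Finally, by the Euclidean chain rule,
\[
\frac{d}{dt} f\big(\gamma(t)\big) = \gr F\big(\gamma(t)\big) \cdot \gamma'(t) = \sgrt f\big(\gamma(t)\big) \cdot \gamma'(t),
\]
and the Cauchy-Schwarz inequality combined with the fundamental theorem of calculus yields
\[
\big| f(\gamma(b)) - f(\gamma(a)) \big|
= \bigg| \int_a^b \sgrt f(\gamma(t)) \cdot \gamma'(t) \, dt \bigg|
\le \int_a^b \big| \sgrt f(\gamma(t)) \big| \, |\gamma'(t)| \, dt,
\]
as desired. I do not anticipate any serious obstacle: the argument amounts to bookkeeping with the chain rule, and the only mildly technical point, namely extending $f$ across the equator when $\gamma$ touches it, is routine and can be avoided entirely whenever the image of $\gamma$ is contained in the open hemisphere.
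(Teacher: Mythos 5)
Your proof is correct and follows essentially the same route as the paper: extend $f$ to a $0$-homogeneous function $F$ on (a neighborhood of) the half-space, observe via the $(d+1)$-dimensional version of \eqref{iden3} that $\gr F = \sgrt f$ on $S^d_+$, and conclude by the chain rule, the fundamental theorem of calculus, and Cauchy--Schwarz. The only cosmetic difference is that the paper works directly with $F \in C^1(E)$ on the closed cone $E=\{z\in\RR\setminus\{0\}: z_{d+1}\ge 0\}$, so no extension across the equator is needed.
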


\begin{proof}
Let $F$ be an extension of $f$ to 
$E = \{ z \in \RR \setminus \{ 0 \} : z_{d+1} \ge 0 \}$ which is homogeneous of order $0$, i.e., $F(z) = f (z/|z|)$ for 
$z \in E$. 
Then
$F \in C^1(E)$ and $\sgrt f (z) = \gr F (z)$, $z \in S^d_+$, because of \eqref{iden3} in $d+1$ dimensions.
Since $\gamma$ has values in $S^d_+$, we see that $f \circ \gamma = F \circ \gamma \in C^1([a,b])$ and we have
\[
|f(\gamma (b)) -  f(\gamma (a))| 
\le 
\int_a^b |(F \circ \gamma)'(t) | \, dt.
\]
The asserted inequality follows.

\end{proof}

Observe that \eqref{Poinstand} and Lemmas \ref{lem:walk} and \ref{lem:geod} hold also if $d$ is replaced by 1. 
Then the surface measure on  $S^0 = \{-1, 1 \}$  is defined as $d\sigma = \delta_{ \{ -1 \} } + \delta_{ \{ 1 \} }$.  This will be used in the proof of Lemma~\ref{lem:Poin} below.

\begin{proof}[Proof of Lemma~\ref{lem:Poin}]
Let $\tau = \frac{1}{3\pi}$, $z \in S_+^d$, $0 < r \le 1/6$ and $f \in C^1(S^d)$. 
To proceed, it is convenient to distinguish two cases.

\noindent \textbf{Case 1:} $z_{d+1} > 2\tau r$.
In this situation, in view of \eqref{cap} and \eqref{distrel}, we have $\Wt \big(  c^+(z,\tau r) \big) \simeq r^d z_{d+1}^{2\mu}$ and 
\begin{equation*}
\Wt (y) \simeq \Wt (z), \qquad y \in c^+(z,\tau r).
\end{equation*}
Since $c^+(z,\tau r) = c(z,\tau r)$, the required estimate is a direct consequence of the standard Poincar\'e inequality \eqref{Poinstand}.

\noindent \textbf{Case 2:} $0 \le z_{d+1} \le 2 \tau r$.
We write $z = \Big(\sqrt{1-z_{d+1}^2} \, z', z_{d+1} \Big)$ with $z_{d+1} \in [0,1]$ and $z' \in S^{d-1}$, and similarly for $x,y \in S_+^{d}$. Using \eqref{distrel} and the identity
\[
|z-y|^2 = |z' - y'|^2 \sqrt{1-z_{d+1}^2} \, \sqrt{1-y_{d+1}^2}
+ \Big| \sqrt{1-z_{d+1}^2} - \sqrt{1-y_{d+1}^2} \Big|^2
+ |z_{d+1} - y_{d+1}|^2,
\]
it is not hard to show that
\begin{align}\label{rectangle}
c^+(z,\tau r) \subset R(z,\tau r) \subset c^+(z,r),
\end{align}
where 
$R(z,\tau r) = \left\{ \left(\sqrt{1 - y_{d+1}^2} \, y',y_{d+1}\right)\in S_+^d : 0 \le y_{d+1} < 3 \tau r,\, \dist(y',z') < \pi \tau r \right\}$.

Applying the triangle inequality to \eqref{Poinequiv} and using \eqref{cap},
we see that it suffices to verify that
\begin{align} \label{estJ}
& r^{-d - 2\mu} 
\int_{c^+(z,\tau r)} \int_{c^+(z,\tau r)}
\left|f(x) - f\left( \sqrt{1 - x_{d+1}^2} \, y',x_{d+1} \right) \right|^2 
\, \dWt (y) \, \dWt (x) \\ \label{estI}
& \quad + r^{-d - 2\mu} 
\int_{c^+(z,\tau r)} \int_{c^+(z,\tau r)}
\left| f\left( \sqrt{1 - x_{d+1}^2} \, y',x_{d+1} \right) - f(y) \right|^2 
\, \dWt (y) \, \dWt (x) \\ \nonumber
& \qquad \lesssim
r^2 \int_{c^+(z,r)}
|\sgrt f (x) |^2  \, \dWt (x).
\end{align}

We first focus on estimating \eqref{estJ}. 
For $0 < a < 1$ let $f_a$ be the function defined on $S^{d-1}$ by $f_a(y') = f\left( \sqrt{1 - a^2} \, y',a \right)$.
Then $f_a \in C^1(S^{d-1})$, and \cite[Corollary 1.4.3]{DaiXu} implies that
\begin{align*} 
|\sgr f_a (y')| 
\le 
\left|\sgrt f \left( \sqrt{1 - a^2} \, y',a \right) \right|,
\qquad y' \in S^{d-1}, \quad 0 < a < 1.
\end{align*}
Therefore, using Lemma~\ref{lem:walk}, \eqref{rectangle} and then applying the standard Poincar\'e inequality \eqref{Poinstand} in $S^{d-1}$ to $f_{x_{d+1}}$, we get the following estimate of \eqref{estJ}
\begin{align*}
& r^{-d - 2\mu}
\int_0^{3 \tau r}
\int_{ \dist(x',z') < \pi \tau r} \int_{ \dist(y',z') < \pi \tau r} 
|f_{x_{d+1}} (x') - f_{x_{d+1}} (y') |^2 \, d\sigma(y') \, d\sigma(x') 
\, x_{d+1}^{2\mu} \, dx_{d+1} \\
& \qquad \qquad \times
\int_0^{3 \tau r} \, y_{d+1}^{2\mu} \, dy_{d+1} \\
& \quad \lesssim
r^{2} \int_0^{3 \tau r} \int_{ \dist(x',z') < \pi \tau r}
\left| \sgrt f \left( \sqrt{1 - x_{d+1}^2} \, x', x_{d+1} \right) \right|^2
\, d\sigma(x') \, x_{d+1}^{2\mu} \, dx_{d+1}.
\end{align*}
This leads directly to the asserted bound for the quantity in \eqref{estJ}.

Next, we deal with the term \eqref{estI}.
We apply Lemma~\ref{lem:geod} with 
$\gamma(t) = \big( \sqrt{1 - t^2} \, y',t \big)$, 
$t \in [x_{d+1} \wedge y_{d+1}, x_{d+1} \vee y_{d+1}] \subset [0,3 \tau r]$; note that for these $t$ and $x,y \in c^+(z,\tau r)$ obviously $\gamma(t) \in R(z,\tau r) \subset c^+(z,r)$.
Denoting $F = |\sgrt f|^2 \chi_{  c^+(z,r) }$, we apply
the Cauchy-Schwarz inequality, Lemma~\ref{lem:walk}, and \eqref{rectangle}. As a result, we see that the expression in \eqref{estI} is controlled by
\begin{align*}
& r^{-d - 2\mu}
\int_{ c^+(z,\tau r) } \int_{ c^+(z,\tau r) }
|y_{d+1} - x_{d+1}| \int_{x_{d+1} \wedge y_{d+1}}^{x_{d+1} \vee y_{d+1}} 
F\left( \sqrt{1 - t^2} \, y',t \right)  
\, dt \, \dWt(y) \, \dWt(x)\\
& \lesssim
r^{1-d - 2\mu} 
\int_0^{3 \tau r} \int_0^{3 \tau r}
\int_{ \dist(y',z') < \pi \tau r} 
\int_{x_{d+1} \wedge y_{d+1}}^{x_{d+1} \vee y_{d+1}}  F\left( \sqrt{1 - t^2} \, y',t \right) \, dt \, d\sigma(y') \, y_{d+1}^{2\mu} \, dy_{d+1} \, x_{d+1}^{2\mu} \, dx_{d+1} \\
& \qquad \qquad \times
\int_{ \dist(x',z') < \pi \tau r} \, d\sigma(x') \\
& \simeq
r^{- 2\mu} 
\int_0^{3 \tau r} \int_0^{3 \tau r}
\int_{ \dist(y',z') < \pi \tau r} 
\int_{x_{d+1} \wedge y_{d+1}}^{x_{d+1} \vee y_{d+1}}  F\left( \sqrt{1 - t^2} \, y',t \right)  
\, dt \, d\sigma(y') 
\, y_{d+1}^{2\mu} \, dy_{d+1} \, x_{d+1}^{2\mu} \, dx_{d+1}.
\end{align*}
Since the roles of $x_{d+1}$ and $y_{d+1}$ in the last expression are symmetric, we may assume that $x_{d+1} \le y_{d+1}$.
Further, changing the order of integration we see that \eqref{estI} is dominated, up to a multiplicative constant, by
\begin{align*}
& r^{- 2\mu} \int_{ \dist(y',z') < \pi \tau r} 
\int_0^{3\tau r}\left(\int_0^{t} x_{d+1}^{2\mu} \, dx_{d+1} \right)
\left(\int_t^{3 \tau r} y_{d+1}^{2\mu} \, dy_{d+1} \right)
F\left( \sqrt{1 - t^2} \, y',t \right) \, dt \, d\sigma(y') \\
& \lesssim
r^2 \int_{ \dist(y',z') < \pi \tau r} 
\int_0^{3\tau r} F\left( \sqrt{1 - t^2} \, y',t \right) t^{2\mu} \, dt \, d\sigma(y').
\end{align*}
Finally, Lemma~\ref{lem:walk} leads to the required estimate for the expression in \eqref{estI}.

This finishes Case 2, and the proof of Lemma~\ref{lem:Poin} is complete.

\end{proof}

\vspace{0,5cm}
\section{Mixed norm estimates - proof of Theorem~\ref{thm:mixed}}
\label{sec:mixed}
The following lemma deals with products of weights. To state it we write $[\cdot]_p$ for the $A_p$ constants as defined e.g. in \cite[(1.1)]{Duo1}. We recall that $A_p^{\mu} = A_p (B, \W, \distB)$.

\begin{lem}\label{lem:weight}
Let $1<p<\infty$ and $v \in A_p(S^{d-1})$, $u \in A_p((0,1), d\lambdamu )$. Then $w(x) = v(x/|x|) u(|x|) \in A_p^{\mu}$ and
\[
[w]_p \le C [v]_p [u]_p,
\]
where the $A_p$ constants are taken in the relevant spaces, and $C>0$ depends only on $d,\mu$ and $p$.
\end{lem}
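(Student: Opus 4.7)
The plan is to exploit the product structure in polar coordinates $x = s\xi$ (with $s = |x|$, $\xi = x/|x|$): the measure factors as $\W = d\lambdamu(s) \, d\sigma(\xi)$ and the weight as $w(s\xi) = u(s) v(\xi)$, so averages over polar \emph{rectangles} $I \times \Sigma := \{s\xi : s \in I,\,\xi \in \Sigma\}$ split as products of one-variable averages. Since $\distB$-balls are not quite such rectangles, I would majorize each ball by a polar rectangle of comparable $W_\mu$-measure and use monotonicity.

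\textbf{Step 1 (rectangular cover).} The core geometric claim I would establish is that for every $z \in B$ and $r > 0$ there exist an interval $I=I(z,r) \subset (0,1)$ and a spherical cap $\Sigma = \Sigma(z,r) \subset S^{d-1}$ (possibly $\Sigma = S^{d-1}$) such that
\[
B(z,r) \subset R(z,r) := \{s\xi : s \in I,\, \xi \in \Sigma\}, \qquad \lambdamu(I) \, \sigma(\Sigma) \le C \, W_\mu(B(z,r)).
\]
The starting point is the identity (from $\cos \distB(x,z) = \cos\theta\cos\phi + \sin\theta\sin\phi \cos a$, where $|x|=\sin\theta$, $|z|=\sin\phi$ and $a = \dist(\xi, z/|z|)$ in $S^{d-1}$)
\[
\sin^2 \frac{\distB(x,z)}{2} = \sin^2 \frac{\theta-\phi}{2} + \sin\theta\sin\phi \, \sin^2 \frac{a}{2},
\]
which yields $|\theta - \phi| \lesssim r$ and $\sqrt{\sin\theta\sin\phi}\,\sin(a/2) \lesssim \sin(r/2)$ whenever $\distB(x,z) < r$. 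Passing back to $s = \sin\theta$ produces an interval $I$ (of length comparable to $r(\sqrt{1-|z|^2}+r)$), while the angular bound places $\xi$ in a cap around $z/|z|$ (taken to be all of $S^{d-1}$ when $|z| \lesssim r$). A direct computation of $\lambdamu(I)$ and $\sigma(\Sigma)$, compared with Lemma~\ref{lem:doub}, yields $\lambdamu(I)\sigma(\Sigma) \simeq W_\mu(B(z,r))$.

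\textbf{Step 2 (combining).} For any non-negative function $h$ on $B$, the containment $B(z,r) \subset R(z,r)$, the comparability $W_\mu(R(z,r)) \simeq W_\mu(B(z,r))$ and Fubini give
\[
\frac{1}{W_\mu(B(z,r))} \int_{B(z,r)} h \, \W
\le C \cdot \frac{1}{\lambdamu(I)\sigma(\Sigma)} \int_I \int_\Sigma h(s\xi) \, d\sigma(\xi) \, d\lambdamu(s).
\]
Applying this with $h = w$ and then with $h = w^{-1/(p-1)}$, and using $w(s\xi) = u(s)v(\xi)$ to split each double integral, one obtains
\[
\frac{1}{W_\mu(B(z,r))}\int_{B(z,r)} w\,\W \cdot \bigg( \frac{1}{W_\mu(B(z,r))}\int_{B(z,r)} w^{-\frac{1}{p-1}}\,\W \bigg)^{p-1} \le C \, [u]_p \, [v]_p,
\]
the last inequality following from the defining property of $A_p((0,1), d\lambdamu)$ applied to the interval $I$ and of $A_p(S^{d-1})$ applied to the cap $\Sigma$. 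Taking the supremum over balls $B(z,r)$ in $(B,\W,\distB)$ yields $[w]_p \le C [u]_p [v]_p$.

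\textbf{Main obstacle.} The technical heart is Step 1. The ball $B(z,r)$ changes shape depending on the relative sizes of $r$, $|z|$ and $\sqrt{1-|z|^2}$: for $r \ll \sqrt{1-|z|^2}$ it is anisotropic (short radially, wider angularly), for $r \gtrsim \sqrt{1-|z|^2}$ it touches the boundary, and for $|z| \ll r$ it engulfs the origin and forces $\Sigma = S^{d-1}$. Producing, uniformly over these regimes, an interval $I$ and cap $\Sigma$ whose product measure matches the doubling estimate of Lemma~\ref{lem:doub} is where all the case analysis resides; the underlying estimates are elementary trigonometry, but assembling them cleanly is the delicate part.
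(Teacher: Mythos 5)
Your proposal is correct and takes essentially the same route as the paper: cover the $\rho_B$-ball by a polar rectangle $I \times \Sigma$ whose $d\lambda_\mu \times d\sigma$-measure is comparable to $W_\mu(B(z,r))$, then factor the $A_p$ quotients through the product structure. The paper derives the same rectangular containment by slightly different (more hands-on, chord/angle) geometry with the analogous three-case split on the sizes of $|z|$, $\sqrt{1-|z|^2}$, and $r$, whereas your spherical law-of-cosines identity is a cleaner algebraic starting point for the same estimates.
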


We first use Lemma~\ref{lem:weight} to prove Theorem~\ref{thm:mixed}. The proof relies on the extrapolation theorem of Rubio de Francia, see for instance \cite[Theorem 3.1]{Duo1}. Its proof is based on the Rubio de Francia algorithm and easily extends to the setting of a space of homogeneous type. In our context the extrapolation theorem implies that if \eqref{mix} holds for $1< p = q < \infty$, then it holds for all $1< p,q < \infty$. The case $p=q$ is a simple consequence of Theorem~\ref{thm:main} and Lemma~\ref{lem:weight}. This concludes the proof of Theorem~\ref{thm:mixed}.

\begin{proof}[Proof of Lemma~\ref{lem:weight}]
We claim that for any $x\in B$ and $s>0$ there exist a subinterval $I$ of $(0,1)$ and a spherical cap $Q$ in $S^{d-1}$ such that
\begin{align} \label{est7}
B(x,s) \subset P_{I,Q}:=\{rz' : r \in I, z' \in Q\}, \qquad 
\lambdamu(I) \sigma(Q) \lesssim W_\mu(B(x,s)).
\end{align}
Given this claim, the desired conclusion follows from a straightforward computation involving the $A_p$ condition, since the measure $dW_\mu$ is $d\lambdamu \times d\sigma$ when expressed in polar coordinates.

To prove the claim, observe that it is trivial if $s \ge 1/6$, since then $W_\mu(B(x,s)) \simeq 1$ and we can take $I=(0,1)$ and $Q= S^{d-1}$. Thus from now on we assume that $s< 1/6$ and consider three cases.

\noindent \textbf{Case 1:} $\sqrt{1-|x|^2} > 2s$, $|x| \le 2s$.
Since we have $|z-x|<s$ for $z \in B(x,s)$, 
we can take $I=(0,3s)$ and $Q=S^{d-1}$. Then using Lemma~\ref{lem:doub} and the fact that $1-|x|^2 \simeq 1$ we obtain $W_\mu ( B(x,s)) \simeq s^d \simeq \lambdamu(I)$, which leads directly to \eqref{est7}.

\noindent \textbf{Case 2:} $\sqrt{1-|x|^2} > 2s$, $|x| > 2s$.
The ball $B(x,s)$ is the image under the projection $\pi$ of the spherical cap in $S_+^d$ of center 
$\pi^{-1} (x) = (x, \sqrt{1-|x|^2})$ and geodesic radius $s$. The projection on the $x_{d+1}$ axis of this cap is contained in the interval $(\sqrt{1-|x|^2} - s, \sqrt{1-|x|^2} + s)$. Since $s < \sqrt{1-|x|^2}/2$, this implies that
\begin{align*}
\sqrt{1-|x|^2}/2 < 
\sqrt{1-|z|^2} < 3 \sqrt{1-|x|^2}/2, 
\qquad z \in B(x,s).
\end{align*}
The largest and smallest values of  $|z|$ as $z \in \overline{B(x,s)}$ are taken at points $z_+$ and $z_-$ in 
$\partial B(x,s)$ which are multiples of $x$. In the plane spanned by $x$ and the $x_{d+1}$ axis, we consider the chord between $\pi^{-1} (x)$ and $\pi^{-1} (z_+)$ and that between $\pi^{-1} (x)$ and $\pi^{-1} (z_-)$.
The angles between these chords and the hyperplane $\R$ are at least $\arccos \sqrt{1-|x|^2}$ and $\arccos \sqrt{1-|z_-|^2}$, respectively. This implies that 
$|z_+| \le |x| + s \sqrt{1-|x|^2}$ and $|z_-| \ge |x| - s \sqrt{1-|z_-|^2} \ge |x| - 3s \sqrt{1-|x|^2}/2$.
As a result, we see that 
\begin{align} \label{iden89}
\big| |z| - |x| \big| < 3s \sqrt{1-|x|^2}/2, \qquad z \in B(x,s).
\end{align}

Next, we estimate $\dist( x/|x|, z/|z| )$ for $z \in B(x,s)$, which is the angle between the $\R$ vectors $x$ and $z$. Now $B(x,s)$ is contained in the closed $d$-dimensional Euclidean ball $B_E$ of center $x$ and radius $s$. The maximal angle $\theta$ between $x$ and points of $B_E$ satisfies $\sin \theta = s/|x|$. Thus $\theta \le \pi s/(2|x|)$, which means that 
\begin{align} \label{iden90}
\dist( x/|x|, z/|z| ) < \pi s/(2|x|), \qquad z \in B(x,s).
\end{align}

In view of \eqref{iden89} and \eqref{iden90}, we can take
\[
I=\big\{r \in (0,1): \big| r-|x| \big| < 3s \sqrt{1-|x|^2}/2  \big\},
\qquad 
Q = \big\{z' \in S^{d-1}: 
\dist\big(  z' , x/|x| \big) < \pi s/(2 |x|) \big\}.
\]
Since
\[
\lambdamu(I) \simeq s (1-|x|^2)^{\mu} |x|^{d-1}, \qquad
\sigma(Q) \simeq (s/|x|)^{d-1},
\]
this implies \eqref{est7}, in view of Lemma~\ref{lem:doub}.

\noindent \textbf{Case 3:} $\sqrt{1-|x|^2} \le 2s$.
Arguing as in Case 2, we see that $B(x,s)\subset P_{I,Q}$, where 
$I=(\sqrt{1-9s^2},1)$ and $Q=\{z' \in S^{d-1}: \dist(z' , x/|x|) < \pi s\}$.
Then $\lambdamu(I) \simeq s^{2\mu + 1}$, $\sigma(Q) \simeq s^{d-1}$, and the estimate in \eqref{est7} follows directly from Lemma~\ref{lem:doub}.

The proof of Lemma~\ref{lem:weight} is finished.

\end{proof}

Finally, we verify that Theorem~\ref{thm:mixed} implies \cite[Theorem 3]{C} as claimed in Remark~\ref{rem:C}.
Using the subordination formula, we see that the Poisson maximal operator considered in \cite{C} is dominated by $H_*^{\mu}$. Further, in Theorem~\ref{thm:mixed} we can put $v\equiv 1$ and replace $u$ by a weight satisfying the conditions imposed by Ciaurri. Indeed, his conditions say, in our notation, that
\[
\sup_{0<x<y<1} \frac{(1-x)^{p/2}}{(y-x)^p} 
\bigg( \int_x^y u(r) \a_p(r) \, dr \bigg)
\bigg( \int_x^y u(r)^{1-p'} \b_p(r) \, dr \bigg)^{p-1} 
< \infty,
\]
where $1/p + 1/p' = 1$ and 
\[
\a_p(r) = r^{(d-1)(1-p/2)} (1-r)^{\mu (1-p/2) -1/2}, \qquad
\b_p(r) = \big( (1-r)^{1/2} \a_p(r) \big)^{1-p'} (1-r)^{-1/2}.
\] 
For $\mu \ge 0$ the above property forces 
\begin{align} \nonumber
& \sup_{0<x<y<1} \lambdamu(x,y)^{-p} 
\bigg( \int_x^y u(r) \a_p(r) r^{(d-1)p/2} (1-r)^{\mu p/2} \, dr \bigg) 
\\ \label{condition}
& \qquad \qquad \qquad \qquad \qquad \qquad \times
\bigg( \int_x^y u(r)^{1-p'} \b_p(r) 
r^{(d-1)p'/2} (1-r)^{\mu p'/2} \, dr \bigg)^{p-1} 
< \infty,
\end{align}
because here $r\le y$, $1-r\le 1-x$, and 
$\lambdamu(x,y) \simeq (y-x)(1-x)^{\mu-1/2} y^{d-1}$.
Since \eqref{condition} is equivalent to $u \in A_p((0,1), d\lambdamu)$, we see that Theorem~\ref{thm:mixed} implies \cite[Theorem 3]{C}.

\vspace{0,5cm}
\section{Appendix}
\label{sec:appendix}
Here we verify the estimate \eqref{Qest}, which is restated in the following proposition.
\begin{pro}\label{prop:Qest}
For $n \in \N$, $0 \le 2j \le n$ and $1 \le \kappa \le h(n-2j)$
\begin{equation*}
\big| \Q(x) \big| \lesssim e^{n}, \qquad x \in B. 
\end{equation*}
\end{pro}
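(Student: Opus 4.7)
The plan is to estimate the three factors in the definition of $\Q$ separately and then combine. Two of the three estimates are routine polynomial bounds in $n$. For the normalizing constant, I would apply Stirling's formula in the form $\Gamma(x+a)/\Gamma(x+b)\lesssim (x+1)^{a-b}$, valid uniformly for $x\ge 0$ with $a,b$ bounded and $x+a,x+b>0$, to each of the two Gamma ratios making up $C_{n,j}^\mu$. A short case analysis depending on whether $\mu\ge 1/2$ or $\mu<1/2$, using $0\le j\le n/2$, then gives $(C_{n,j}^\mu)^{-1}\lesssim n^{K_0}$ for some $K_0=K_0(d,\mu)$. For the solid spherical harmonic, write $S_{k,\kappa}(x)=|x|^k Y_{k,\kappa}(x/|x|)$ with $Y_{k,\kappa}$ an $L^2(S^{d-1})$-orthonormal surface harmonic; the standard reproducing-kernel identity $\sum_{\kappa=1}^{h(k)}|Y_{k,\kappa}(\xi)|^2 = h(k)/\sigma(S^{d-1})$ yields $\|Y_{k,\kappa}\|_{\infty}\lesssim k^{(d-2)/2}$, and hence $|S_{n-2j,\kappa}(x)|\lesssim n^{(d-2)/2}$ for $x\in B$.

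The main obstacle is a uniform bound for the Jacobi polynomial $P_j^{\mu-1/2,\, n-2j+d/2-1}(t)$ on $t\in[-1,1]$. The difficulty is that the parameter $\beta=n-2j+d/2-1$ grows with $n$, so the classical $\sup$-norm estimates for Jacobi polynomials with fixed parameters are not applicable. To bypass this I would use the hypergeometric representation \cite[(4.3.2)]{Sz},
\[
P_j^{\alpha,\beta}(t) = \sum_{\nu=0}^j \binom{j+\alpha}{j-\nu}\binom{j+\beta}{\nu}\left(\frac{t-1}{2}\right)^\nu \left(\frac{t+1}{2}\right)^{j-\nu}.
\]
Since $\mu>-1/2$ gives $\alpha=\mu-1/2>-1$, and $d\ge 2$ together with $n-2j\ge 0$ gives $\beta\ge 0$, all the generalized binomial coefficients above are nonnegative. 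For $t\in[-1,1]$ both base factors are bounded by $1$ in absolute value, so the triangle inequality together with Vandermonde's identity yields
\[
|P_j^{\alpha,\beta}(t)| \le \sum_{\nu=0}^j \binom{j+\alpha}{j-\nu}\binom{j+\beta}{\nu} = \binom{2j+\alpha+\beta}{j}.
\]
Setting $M=2j+\alpha+\beta = n+\mu+d/2-3/2$, one has $M\ge j$ in our range ($0\le j\le n/2$), so by the elementary monotonicity of $t\mapsto \binom{t}{j}$ on $[j-1,\infty)$ combined with the trivial bound $\binom{N}{j}\le 2^N$ for integer $N\ge 0$, we obtain $\binom{M}{j}\le \binom{\lceil M\rceil}{j}\le 2^{\lceil M\rceil}\lesssim 2^n$, uniformly in $j$ and $t\in[-1,1]$.

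Putting the three bounds together produces $|\Q(x)|\lesssim n^{K}\,2^n$ for some $K=K(d,\mu)$. Since $e/2>1$, the exponential $(e/2)^n$ dominates every polynomial in $n$, hence $n^K\lesssim (e/2)^n$ with a constant depending only on $K$. Multiplying by $2^n$ gives $|\Q(x)|\lesssim e^n$, as asserted.
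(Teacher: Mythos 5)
Your proof is correct, and the overall structure matches the paper: each of the three factors in the definition of $\Q$ is bounded separately, the normalizing constant $\big(C_{n,j}^\mu\big)^{-1}$ and the solid spherical harmonic contribute only polynomial growth in $n$, and the Jacobi factor supplies the exponential $2^n$, which is then absorbed into $e^n$.

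The genuinely different ingredient is how you control $\sup_{[-1,1]}\big|P_j^{\mu-1/2,\,n-2j+d/2-1}\big|$. The paper quotes Szeg\H{o}'s sharp sup-norm formula \cite[(7.32.2)]{Sz}, which identifies the maximum exactly as $\binom{j+q}{j}$ with $q=\max(\alpha,\beta)$, and then controls this binomial coefficient with Stirling's formula together with the auxiliary Lemma~\ref{lem:expest} on $x^x$. You instead start from the explicit series representation \cite[(4.3.2)]{Sz}, observe that with $\alpha=\mu-1/2>-1$ and $\beta=n-2j+d/2-1\ge 0$ every generalized binomial coefficient is nonnegative, so that on $[-1,1]$ the triangle inequality combined with the Chu--Vandermonde convolution yields the cruder (but sufficient) bound $\binom{2j+\alpha+\beta}{j}$, and then estimate this via the monotonicity of $t\mapsto\binom{t}{j}$ and the trivial inequality $\binom{N}{j}\le 2^N$. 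Your route avoids both the invocation of the precise sup-norm theorem (which requires identifying where the maximum is attained and when the formula applies) and the Stirling-based manipulations of Lemma~\ref{lem:expest}; it is more elementary and essentially self-contained. The paper's approach is sharper at the intermediate stage but buys nothing for the final $\lesssim e^n$ conclusion. Both handle the two remaining factors in the same way (Stirling-type ratios of Gamma functions, and the reproducing-kernel identity on $S^{d-1}$, respectively).
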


To prove this, we will use the following simple result.
\begin{lem}\label{lem:expest}
\begin{itemize}
\item[(a)] For any $a > 0$ we have
\[
x^x \le (x+a)^x \le e^a x^x, \qquad x > 0.
\]
\item[(b)] The following estimate holds
\[
(x+y)^{x+y} \le 2^{x+y} x^x y^y, \qquad x,y > 0.
\]
\end{itemize}
\end{lem}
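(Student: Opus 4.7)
The plan is to handle the two parts by elementary calculus arguments, both resting on standard convexity/monotonicity properties of $\log$.

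For part (a), the left inequality $x^x \le (x+a)^x$ is immediate from $x+a > x$ and $x>0$. For the right inequality, I would divide both sides by $x^x$ to reduce to showing $(1+a/x)^x \le e^a$, and then invoke the elementary bound $\log(1+t) \le t$ (valid for $t>-1$) with $t = a/x > 0$: multiplying by $x$ gives $x\log(1+a/x) \le a$, and exponentiating yields the claim.

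For part (b), I would take logarithms and reduce to $(x+y)\log(x+y) \le (x+y)\log 2 + x\log x + y \log y$, equivalently
\[
(x+y)\log\frac{x+y}{2} \le x\log x + y\log y.
\]
This is exactly Jensen's inequality applied to the convex function $\varphi(t)=t\log t$ on $(0,\infty)$ at the midpoint of $x$ and $y$: convexity gives
\[
\varphi\!\left(\frac{x+y}{2}\right) \le \frac{\varphi(x)+\varphi(y)}{2},
\]
and multiplying by $2$ yields the desired inequality after exponentiation. Convexity of $\varphi$ is verified by $\varphi''(t)=1/t > 0$.

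Neither part presents any real obstacle; both are one-line applications of a standard inequality ($\log(1+t)\le t$ for (a), convexity of $t\log t$ for (b)). The only care needed is to keep $x,y>0$ strict so that all logarithms and divisions are well-defined, which matches the hypothesis of the lemma.
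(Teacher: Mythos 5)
Your proof is correct. For part (a) your route via $\log(1+t)\le t$ is essentially the same elementary argument as the paper's, which instead invokes the monotonicity of $s\mapsto(1+\tfrac1s)^s$ and its limit $e$ together with the identity $(x+a)^x=x^x\bigl(1+\tfrac{a}{x}\bigr)^{\frac{x}{a}a}$; the two are interchangeable one-liners. For part (b) you genuinely diverge: the paper first normalizes by homogeneity, writing $y=\lambda x$ with $\lambda\ge 1$ to reduce to $(1+\lambda)^{1+\lambda}\le 2^{1+\lambda}\lambda^\lambda$, and then checks this by differentiating $F(\lambda)=2^{1+\lambda}\lambda^\lambda(1+\lambda)^{-1-\lambda}$ and noting $F(1)=1$, $F'\ge 0$; you instead take logarithms and apply midpoint convexity (Jensen) for $\varphi(t)=t\log t$, which gives exactly $(x+y)\log\frac{x+y}{2}\le x\log x+y\log y$. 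Your convexity argument is arguably cleaner and more conceptual, avoiding the normalization and the derivative computation, while the paper's approach stays entirely within single-variable monotonicity arguments in the same spirit as its proof of (a). Both are complete and correct.
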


\begin{proof}
Since the function $s \mapsto \left(1 + \frac{1}{s} \right)^s$ is increasing for $s>0$ and has limit $e$ as $s \to \infty$, item (a) follows from the identity $(x+a)^x = x^x (1+\frac{a}{x})^{\frac{x}{a} a}$.
We focus on showing (b). Without any loss of generality we may assume that $y \ge x$, i.e. $y=\lambda x$ for some $\lambda \ge 1$. 
Using this we can easily see that our task is equivalent to showing that
\[
(1+\lambda)^{1+\lambda} \le 2^{1+\lambda} \lambda^\lambda, \qquad \lambda \ge 1.
\]
Letting $F(\lambda) = 2^{1+\lambda} \lambda^\lambda (1+\lambda)^{-1-\lambda}$, we must prove that $F(\lambda) \ge 1$ for $\lambda \ge 1$. This follows from the fact that $F(1) = 1$ and $F'(\lambda) = F(\lambda) \log \frac{2 \lambda}{1 + \lambda} \ge 0$ for $\lambda \ge 1$.
\end{proof}

Stirling's formula implies that for a fixed $x_0 > 0$ we have
\begin{equation}\label{Gasy}
\Gamma(x) \simeq x^{x-1/2} e^{-x}, \qquad x\ge x_0,
\end{equation}
where the implicit constants depend on $x_0$.

\begin{proof}[Proof of Proposition~\ref{prop:Qest}]
We first show that $\big( C_{n,j}^\mu \big)^{-2}$ and $\sup_{x \in B} |S_{n-2j, \kappa}(x)|$ have at most polynomial growth in $n$. For the latter quantity, this follows from the formula
\[
\sum_{\kappa = 1}^{h(n-2j)} |S_{n-2j, \kappa}(x)|^2 
= \frac{h(n-2j)}{\sigma(S^{d-1})} |x|^{2n - 4j} 
\lesssim h(n-2j) \le (n+1)^d,
\qquad x \in B,
\]
which can be deduced for instance from \cite[Corollary 1.2.7]{DaiXu} (note that here we use another normalization of spherical harmonics than in \cite{DaiXu}).

Now we focus on $C_{n,j}^\mu$. Using \eqref{Gasy} and then Lemma~\ref{lem:expest} (a), we obtain
\begin{align*}
\big( C_{n,j}^\mu \big)^{-2}
& = 
\frac{2\big(n + \mu + (d-1)/2 \big) \, \Gamma(j + 1) \, \Gamma\big(n - j +\mu + (d-1)/2\big)} 
{\Gamma(j + \mu + 1/2) \, \Gamma(n - j + d/2) } \\
& \simeq
(n+1) \,
\frac{(j+1)^{j+1/2} \big(n - j +\mu + (d-1)/2\big)^{n - j +\mu + d/2 - 1} }
{(j + \mu + 1/2)^{j + \mu} (n - j + d/2)^{n - j + (d-1)/2}} \\
& \lesssim
(n+1)^A \,
\frac{(j+1)^{j} \big(n - j +\mu + (d-1)/2\big)^{n - j} }
{(j + \mu + 1/2)^{j} (n - j + d/2)^{n - j}}
\simeq
(n+1)^A, 
\end{align*}
uniformly in $n \in \N$ and $0 \le 2j \le n$, where $A=A(d,\mu)$ is some positive constant.

Finally, we show that 
\[
\sup_{y \in [-1,1]} \big| P_j^{\mu-1/2, n-2j + d/2 -1}(y) \big|
\lesssim
2^{n}, 
\qquad n \in \N, \quad 0 \le 2j \le n.
\]
From \cite[(7.32.2)]{Sz} we see that the left-hand side above is equal to $\binom{j+q}{j}$, where $q = (\mu-1/2) \vee (n-2j + d/2 -1) \ge 0$.
Combining this with \eqref{Gasy}, Lemma~\ref{lem:expest} (b) 
(specified to $x = j + 1/2$, $y = q + 1/2$) and (a), we get
\begin{align*}
\sup_{y \in [-1,1]} \big| P_j^{\mu-1/2, n-2j + d/2 -1}(y) \big|
& = 
\binom{j+q}{j} 
= \frac{\Gamma(j + q + 1)}{\Gamma(j + 1) \, \Gamma(q + 1)}
\simeq
\frac{(j + q + 1)^{j + q + 1/2}}{(j + 1)^{j + 1/2} (q + 1)^{q + 1/2}} \\
& \le 
2^{j + q + 1}
\frac{(j + 1/2)^{j + 1/2} (q + 1/2)^{q + 1/2}}
{(j + 1)^{j + 1/2} (q + 1)^{q + 1/2}}
\simeq 
2^{j + q}
\lesssim 
2^n.
\end{align*}
This finishes the proof of Proposition~\ref{prop:Qest}.
\end{proof}


\end{document}